 \documentclass[12pt]{amsart}                                 
                                                                                                             
\usepackage{amscd,amsthm,amsmath,amssymb,amsfonts}

\newtheorem{Theorem}{Theorem}[section]

\newtheorem{Lemma}[Theorem]{Lemma}

\def\buildreb#1\over#2{\mathrel{\mathop{\kern0pt #1}\limits_{#2}}}

\def\fin{\end{document}}

\usepackage{nicefrac,xcolor,upref}

\newtoks\baspagetitre
\baspagetitre={\hfill{}}

\newif\ifpagetitre \pagetitretrue 
\def\footnoterule{\kern -3pt\hrule width 2truein \kern 2.4pt}
\renewcommand{\thepage}{\ifpagetitre\the\baspagetitre\global\pagetitrefalse
 \else \arabic{page}\fi} 
 
 \def\eps{\varepsilon} \def\alp{\alpha} \def\Z{{\mathbb Z}}  \def\R{{\mathbb R}}
 \def\Q{{\mathbb Q}} \def\gam{\gamma} \def\sig{\sigma}   \def\C{{\mathbb C}}
 
  \def\Om{{\bf {\Omega}}} \def\bsl{\backslash} \def\tr{{\rm Tr}} \def\om{\omega} 
    \def\Gam{\Gamma}
 
\def\rme{{\rm e}}
\def\rmi{{\rm i}} 

\begin{document}

\title[Refinements of Peck's theorem]{Refinements of Peck's theorem on simultaneous approximation to algebraic numbers}

\author{Yann Bugeaud}
\address{Universit\'e de Strasbourg, Math\'ematiques, 7, rue Ren\'e Descartes, 67084 Strasbourg  (France)} 
\address{Institut universitaire de France} 
\email{bugeaud@math.unistra.fr}

\author{Bernard de Mathan} 
\email{bernard.demathan@gmail.com}

\begin{abstract}
Let $n$ be an integer with $n\ge2$, and let $E$ be a real algebraic number field of degree $n+1$ over $\Q$. 
Let $\alp_1, \ldots , \alp_n$ be real numbers in $E$ such that $(1,\alp_1,\ldots,\alp_n)$ is a linear basis of $E$ over $\Q$.  
Let $\nu_1,  \ldots, \nu_{n-1}$ be real numbers satisfying 
 $$0<\max_{1\le i\le n-1}\nu_i\le2\min_{1\le i\le n-1}\nu_i,\quad \nu_1+ \ldots +\nu_{n-1}=1. $$
 We establish that there exist a real number $C$, depending only on $\alpha_1, \ldots , \alpha_n$, and infinitely many 
 integers $Q \ge 2$ satisfying
the inequalities 
$$Q^{1/n}\Vert Q\alp_i\Vert \le C  (\log Q)^{-\nu_i}, \quad 1\le i\le n-1,
\quad 
Q^{1/n}\Vert Q\alp_n\Vert\le C.$$
This answers partially a conjecture of Peck, who proved in 1961 this statement in the particular case
where $\nu_1 = \ldots = \nu_{n-1} = 1/ (n-1)$. 
We also improve a result from 2004 of de Mathan and Teuli\'e on a question of simultaneous Diophantine 
approximation involving a non-Archimedean valuation. 
\end{abstract}

\subjclass[2020]{11J13, 11J61, 11J68}
\keywords{Simultaneous Diophantine approximation. Non-Archimedean valuations}

\maketitle

\leftskip = 0pt
\rightskip = 2pt

\section{Introduction.} 
The Littlewood conjecture in Diophantine approximation  
asserts that, for any pair $(\alp_1,\alp_2)$ of real numbers, we have  
$$
\inf_{Q \in\Z\bsl\{0\}} \, |Q| \cdot \Vert Q\alp_1\Vert \cdot \Vert Q\alp_2\Vert=0,\eqno\hbox{(Lit)}
$$
where $\Vert .\Vert$ denotes the distance to the nearest integer.
A weaker conjecture is still open, which states that there exists an integer $n \ge 2$ such that, for any $n$-tuple $(\alp_1,\ldots,\alp_n)$  of real numbers, we have  
$$\inf_{Q \in\Z\bsl\{0\}} \, |Q| \cdot \Vert Q\alp_1\Vert\cdots\Vert Q\alp_n\Vert=0. \eqno\hbox{(Litn)}$$
Only few examples of tuples satisfying (Litn) non-trivially are known.
In 1955, Cassels and Swinnerton-Dyer \cite{CS} proved that (Lit) is true for every pair $(\alp_1,\alp_2)$ of real numbers in a cubic field.
Subsequently, Peck \cite{Pe} extended their result in proving  
that (Litn) holds when $\alp_1,\ldots,\alp_n$ are real numbers 
in an algebraic number field of degree $n+1$. 

More precisely, when $1, \alpha_1, \ldots , \alpha_n$ are linearly independent over $\Z$, 
Peck established that there are infinitely many integers $Q \ge 2$ satisfying 
$$Q^{1/n}\Vert Q\alp_i\Vert\ll{1\over(\log Q)^{1/(n-1)}}, \quad 1\le i\le n-1,$$ and
$$Q^{1/n}\Vert Q\alp_n\Vert\ll1,$$
thus,
$$
\liminf_{Q \to + \infty} \,  Q \, (\log Q) \, \Vert Q\alp_1\Vert \cdots \Vert Q\alp_n\Vert<+\infty, 
$$
while we have   
$$
\liminf_{Q \to + \infty}  \, Q^{1/n}\max\{\Vert Q\alp_1\Vert,\ldots,\Vert Q\alp_n\Vert\} > 0. 
$$

Throughout the paper, we use the Vinogradov notation. 
For positive quantities $A$ and $B$, we write $A\ll B$ when $A\le CB$ for a positive constant $C$. 
Furthermore, $A\gg B$  means that $B\ll A$, and $A\asymp B$ means that we have 
both $A\ll B$ and $B\ll A$. The constants implicit in Peck's result depend only on the numbers 
$\alp_1,\ldots,\alp_n$. Actually, all the implicit constants occurring in this paper depend only on
$\alp_1,\ldots,\alp_n$ and (in Theorems 1.2 and 1.3 and in their proofs) on a prime number $p$. 

After stating his result, Peck writes that `one might conjecture that we can find infinitely many solutions of
the inequalities'
$$Q^{1/n}\Vert Q\alp_i\Vert\ll{1\over f_i (Q)}, \quad 1\le i\le n-1,
\quad 
Q^{1/n}\Vert Q\alp_n\Vert\ll1,   \eqno\hbox{(1.1)}$$
with $f_1 (Q) \cdots f_{n-1} (Q) = \log Q$ and $f_i (Q) \ge 1$ for $i = 1, \ldots , n-1$. 

Our first result allows us to confirm Peck's conjecture 
under an additional assumption on $f_1, \ldots , f_{n-1}$.

\begin{Theorem} 
Let $n$ be an integer with $n\ge2$, and let $E$ be a real algebraic number field of degree $n+1$ over $\Q$. 
Let $\alp_1, \ldots , \alp_n$ be real numbers in $E$ such that $(1,\alp_1,\ldots,\alp_n)$ is a linear basis of $E$ over $\Q$.  
There exists a positive constant $C$, depending only on $\alp_1, \ldots , \alp_n$, such that, given 
any real numbers $\eps_1, \ldots , \eps_{n-1}$ with
$$0<\max_{1\le i\le n-1}\eps_i^2\le\min_{1\le i\le n-1}\eps_i,\quad1\le i\le n-1, \eqno\hbox{\rm(1.2)}$$
there exists a positive integer $Q$ such that
$$Q^{1/n}\Vert Q\alp_i\Vert\le C\eps_i,\quad1\le i\le n-1,  \quad   Q^{1/n}\Vert Q\alp_n\Vert\le C, \eqno\hbox{\rm(1.3)}$$  
and $$ 
 \log Q\le {C\over\eps_1 \cdots \eps_{n-1}}\cdot\eqno\hbox{\rm(1.4)}$$ 
 \end{Theorem}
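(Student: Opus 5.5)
We work in the geometry of numbers attached to the units of $E$, following the strategy of Cassels--Swinnerton-Dyer and Peck. Let $\sigma_0=\mathrm{id},\sigma_1,\ldots,\sigma_n$ be the embeddings of $E$. Consider the full lattice $\Lambda\subset\R^{n+1}$ of points $(L_0(x),\ldots,L_n(x))$ with $x=(Q,P_1,\ldots,P_n)\in\Z^{n+1}$. Here $L_0(x)=Q$, and the linear forms $L_j$ are chosen so that they diagonalise multiplication by elements of $E$. Concretely, we use the dual basis: if $(1,\alpha_1,\ldots,\alpha_n)$ has dual basis $(\beta_0,\ldots,\beta_n)$ with respect to the trace, then $\xi=Q\beta_0-\sum P_i\beta_i$ satisfies
$$\sigma_j(\xi)\ \text{is a fixed linear combination of } Q\alpha_i-P_i .$$
Thus small $|Q\alpha_i-P_i|$ correspond to small conjugates $\sigma_j(\xi)$, $j\ne$ the one giving $Q$. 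Complex embeddings are handled by pairing them into real coordinates.

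\textbf{Step 1.} We reduce the problem to finding $\xi$ in a fixed fractional ideal $\mathfrak a$ with $|N(\xi)|\asymp 1$. The conjugate corresponding to $Q$ should be of size $Q$. Each other coordinate $Q\alpha_i-P_i$ should have size about $\eps_iQ^{-1/n}$, and the last about $Q^{-1/n}$. By Dirichlet's unit theorem, the units act on $\Lambda$ by diagonal matrices whose logarithmic embeddings form a lattice in the hyperplane $\sum t_j=0$ (with multiplicities for complex places). The target box, normalised by $Q$, has log-coordinates
$$\bigl(\log Q,\ \log\eps_i-\tfrac1n\log Q,\ -\tfrac1n\log Q\bigr).$$
Up to the bounded ambiguity coming from the finitely many $\xi$ of bounded norm modulo units, the task is therefore to find a unit $\eta$ whose log-vector lies within bounded distance of this point.

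\textbf{Step 2.} This is the main obstacle, and it is where the $\log Q$ versus $1/(\eps_1\cdots\eps_{n-1})$ tradeoff arises. A naive approach fails: the unit lattice has covering radius $O(1)$, which only yields an error of bounded multiplicative size in each coordinate. Arbitrary small $\eps_i$ cannot be forced this way, since taking the $\eps_i$ as exact targets would require the $\eps_i$ to be fixed. The gain must come from the integer lattice $\Lambda$ not being "orthogonal" to the unit action: the coordinates of $Q\alpha_i-P_i$ are linear combinations of several conjugates.

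Following Peck, we exploit cancellation. We choose $\xi=\eta\theta$ with $\theta$ ranging over a fixed set and $\eta$ a unit whose conjugates at the $n$ "small" places are nearly equal in size, of order $Q^{-1/n}$. We then form integer combinations $\xi-\xi'$ of two such points; pigeonhole (box principle) in a box of $\asymp T$ points makes the first $n-1$ approximation coordinates cancel down to size $\eps_i Q^{-1/n}$. Concretely, we take $N\asymp 1/(\eps_1\cdots\eps_{n-1})$ units $\eta_1,\ldots,\eta_N$ with $\log$-heights $\le\log Q\ll N$ along a one-parameter direction. We consider the vectors of their normalised small coordinates in a compact set $K\subset\R^{n-1}$ (after dividing by the last coordinate). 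We partition $K$ into boxes of sides $\eps_1,\ldots,\eps_{n-1}$; there are $\asymp N$ of them, so two units $\eta_k,\eta_l$ fall in the same box. Then $\xi=\eta_k-\eta_l\cdot(\text{correction})$ is an algebraic integer that satisfies (1.3) with $Q\le e^{O(N)}$, which gives (1.4).

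\textbf{Step 3.} Condition (1.2) enters here. When $\eps_i$ are very unequal, the box decomposition must be compatible with the multiplicative structure. The compact set $K$ obtained from a one-parameter family of units is essentially a curve or torus orbit, not a full-dimensional set. Hence to get $\asymp N$ boxes touched with the required equidistribution, we need the box aspect ratios bounded in the sense $\max\eps_i\le\min\eps_i^{1/2}$. This lets us replace the one-parameter family by a multi-parameter one, using $\asymp \eps_i^{-1}$ steps in direction $i$ of the unit lattice, adjusted so that the product count matches. I expect verifying that the differences $\eta_k-\eta_l$ still have the last coordinate $\ll Q^{-1/n}$ and the first equal to $Q\gg1$ (not cancelling) to be routine. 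The genuinely delicate point is this counting under (1.2).
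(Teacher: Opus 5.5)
Your Step 2 does not work, and it is the heart of the proof. The box condition you impose is on normalised coordinates, i.e.\ on ratios, which behave multiplicatively, so it gives no information about an additive difference. Let $x_k,x_l\in\Z^{n+1}$ be the integer vectors attached to $\eta_k,\eta_l$, and put $L_i(x)=Q\alp_i-P_i$ and $\rho_i(x)=L_i(x)/L_n(x)$. Then for $x=x_k-x_l$,
\[
L_i(x)=\rho_i(x_k)\,L_n(x)+\bigl(\rho_i(x_k)-\rho_i(x_l)\bigr)L_n(x_l).
\]
Your pigeonhole controls only the second term. The first term is $\ll\eps_iQ^{-1/n}$ only if $\rho_i(x_k)\ll\eps_i$ already, which is the original problem. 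It gets worse if $|\eta_k|/|\eta_l|$ is large: the small conjugates of $\eta_k-\eta_l$ are then $\asymp|\eta_l|^{-1/n}$, far larger than $Q^{-1/n}$ with $Q\asymp|\eta_k|$, so (1.3) fails outright. This is not a routine check.

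The mechanism that works is multiplicative: pigeonhole in logarithmic coordinates and take a \emph{quotient} of units. With $A_{i,j}=\sig_j(\beta_n)(\alp_i-\sig_j(\alp_i))$, the paper considers the real vectors
\[
X_i=\sum_{1\le j\le n}A_{i,j}\Bigl(\sum_{1\le l\le r}m_l\log\sig_j(\om_l)+2\rmi\pi k_j\Bigr),\qquad 1\le i\le n-1,
\]
indexed by $n$ integer parameters in $[0,M]$. These are the $r$ exponents $m_l$ together with $r_2$ integers $k_j$ that fix the branches of the complex logarithms. You need $n$ parameters, one more than the dimension $n-1$, and the unit group supplies only $r=n-r_2$ of them. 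So the $k_j$ are indispensable when $E$ has complex places, and your ``steps in direction $i$ of the unit lattice'' for $1\le i\le n-1$ do not make sense in general. The $(M+1)^n$ points lie in a cube of side $\asymp M$, not in a fixed compact set. That cube is covered by $\asymp M^{n-1}/(\eps_1\cdots\eps_{n-1})$ boxes of sides $\eps_i$, which forces $M\asymp1/(\eps_1\cdots\eps_{n-1})$ and hence (1.4). A collision gives $\om=\om_1^{m_1}\cdots\om_r^{m_r}$, with Lemma 2.1 ensuring the $m_l$ are not all $0$. One then takes $Q=D\tr(\beta_n\om)$ and $Q_i=D\tr(\alp_i\beta_n\om)$.

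You have also missed the cancellation that makes this work, and as a result you misplace condition (1.2). Since $\tr(\alp_i\beta_n)=0$ for $1\le i\le n-1$, one has $\sum_{1\le j\le n}A_{i,j}=0$, hence
\[
Q\alp_i-Q_i=D\sig_1(\om)\sum_{2\le j\le n}A_{i,j}\bigl(\rme^{Y_j}-1\bigr),\qquad Y_j=\log\frac{\sig_j(\om)}{\sig_1(\om)}.
\]
The pigeonhole gives $|\sum_jA_{i,j}Y_j|\le\eps_i$. Inverting $(A_{i,j})_{1\le i\le n-1,\,2\le j\le n}$, however, yields only $|Y_j|\ll\eps_1$ for each $j$. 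So $\rme^{Y_j}-1=Y_j+O(\eps_1^2)$, and $|Q\alp_i-Q_i|\ll|\sig_1(\om)|(\eps_i+\eps_1^2)$. Condition (1.2) is exactly what absorbs the quadratic remainder $\eps_1^2$ into $\eps_i$.

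The counting itself works for arbitrary $\eps_i$; there is no equidistribution or aspect-ratio issue. So the ``genuinely delicate point'' of your Step 3 is not where the difficulty lies, while the linearisation step that actually needs (1.2) is absent.

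Finally, $|Y_j|\ll\eps_1$ also makes $\om$ a dominant unit, with $|\sig_j(\om)|\asymp\om^{-1/n}$ for $1\le j\le n$. Since $\om\ne1$, some $Y_j$ is nonzero, and this forces $\om$ to be large when $\eps_1$ is small; the paper quantifies this with Baker's theorem. From this you get $Q\asymp\om$, the last inequality of (1.3), and $\log Q\ll\max_l|m_l|\le M$.
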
 
 
Theorem 1.1 implies that, if we consider real numbers $\nu_1,  \ldots, \nu_{n-1}$ with
 $$0<\max_{1\le i\le n-1}\nu_i\le2\min_{1\le i\le n-1}\nu_i,\quad \nu_1+ \ldots +\nu_{n-1}=1,    $$
then Peck's conjecture holds for the functions $f_i$ 
defined by $f_i(Q)=(\log Q)^{\nu_i}$ for $i=1, \ldots , n-1$ and $Q \ge 2$. 
More generally, if the $f_i$ are non-decreasing functions with $f_i\ge1$,  $f_1(Q) \ldots f_{n-1}(Q)=\log Q$ for $Q \ge 2$, and 
$$
\max_{1\le i\le n-1} f_i (Q) \le 2 \min_{1\le i\le n-1}f_i (Q), \quad \hbox{for $Q$ sufficiently large,}  
$$ 
then (1.1) holds for infinitely many integers $Q \ge 2$. 
Indeed, if $\widehat Q$ is a positive integer, then, by (1.3) and (1.4), 
there is a positive integer $Q$ such that $$Q^{1/n}\Vert Q\alp_i\Vert\le {C^{n/(n-1)}\over f_i(\widehat Q)},\quad1\le i\le n-1,\quad Q^{1/n}\Vert Q\alp_n\Vert\le C,
$$ 
with $0<Q\le\widehat Q$, hence
 $$Q^{1/n}\Vert Q\alp_i\Vert\le {C^{n/(n-1)}\over f_i(Q)},\quad1\le i\le n-1,\quad Q^{1/n}\Vert Q\alp_n\Vert\le C.$$
 However, we do not know whether Theorem 1.1 continues to hold if we omit condition (1.2).   
 
Note that, under the assumption of Theorem 1.1, the Schmidt Subspace Theorem implies that   
$$
\liminf_{Q \to + \infty} \,  Q^{1 + \eps} \,  \Vert Q\alp_1\Vert \cdots \Vert Q\alp_n\Vert = +\infty, 
$$
for every $\eps > 0$. 
Let us add that it is proved in \cite{dual4} that, given $C>0$, if we consider the set ${\mathcal Q}_C$ of integers $Q \ge 2$ satisfying $$Q^{1/n}\Vert Q\alp_ n\Vert\le C,$$
then there exist constants $\kappa=\kappa(C)$ and $K=K(C)>0$ such that 
$$Q^{1/n}\max_{1\le i \le n-1}\Vert Q\alp_ i \Vert\ge K(\log Q)^{-\kappa}, \quad Q \in {\mathcal Q}_C.$$

As in \cite{Pe}, we construct dominant units in $E$, that is, units  
$\omega>1$ 
all of whose Galois conjugates, except 
$\omega$, have a modulus $\asymp | \omega |^{-1/n}$. Instead of using the crude estimate 
$\log (1 + x) = O(x)$ in a neighborhood of the origin as in \cite[bottom of p. 200]{Pe}, we use the more precise approximation 
$\log (1 + x) = x + O(x^2)$. The $2$ in the exponent yields the value $2$ occurring in the condition (1.2). 

Let $p$ be a prime number. 
Let $| \cdot |_p$ denote the $p$-adic absolute value normalized such that $|p|_p = p^{-1}$. 
The same idea, applied to the $p$-adic logarithm function, allows us also to improve a result of 
 \cite{dMT} asserting that, under the assumption of Theorem 1.1, 
 there are infinitely many integers $Q \ge 2$ satisfying 
 $$ Q^{1/n}\max\{\Vert Q\alp_1\Vert,\ldots,\Vert Q\alp_n\Vert\}\ll1$$ 
and $$|Q|_p(\log Q)^{1/n}<+\infty,$$ 
thus, 
$$
\liminf_{Q \to + \infty}  \, Q^{1/n}\max\{\Vert Q\alp_1\Vert,\ldots,\Vert Q\alp_n\Vert\}|Q|_p(\log Q)^{1/n}    
<+\infty. \eqno\hbox{\rm(1.5)} 
$$

\begin{Theorem} 
Let $E$ be a real algebraic number field of degree $n+1$ over $\Q$, with $n\ge1$. 
Let $r_1$ be the number of real embeddings of $E$ 
and $2r_2$ the number of  complex nonreal embeddings of $E$. 
Let $p$ be a prime number. 
Let $\alp_1, \ldots ,\alp_n$ be real numbers in $E$ such that $(1,\alp_1,\ldots,\alp_n)$ is a linear basis of $E$ over $\Q$. 
Then we have  
$$\liminf_{Q \to + \infty}  \, Q^{1/n}\max\{\Vert Q\alp_1\Vert,\ldots,\Vert Q\alp_n\Vert\}|Q|_p(\log Q)^{2/(r+1)}<+\infty. $$
More precisely, there exist sequences of integers $(Q_{i,s})_{s \ge 0}$,  $0\le i\le n$,   
 with $Q_{0,s}>0$ for $s \ge 0$, such that
$$Q_{0,s}^{1/n}|Q_{0,s}\alp_i-Q_{i,s}|\ll1, \quad 1\le i\le n,  $$
$$\log Q_{0,s}\ll p^{(r+1)s}, \quad |Q_{0,s}|_p\ll p^{-2s}, \quad |Q_{i,s}|_p\ll p^{-s}, \quad 1\le i\le n-1, $$
for $s \ge 0$, 
where the constants involved depend only on $\alp_1, \ldots , \alpha_n$, and $p$.    
\end{Theorem}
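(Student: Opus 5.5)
Here $r=r_1+r_2-1$ is the unit rank of $E$. The plan follows Peck and de Mathan--Teuli\'e. We look for $Q_{0,s}$ as the trace-type coordinate of a dominant unit $\omega$. Let $(1,\alp_1,\ldots,\alp_n)$ have dual basis $(\beta_0,\ldots,\beta_n)$ with respect to the trace form. For an integer $M$ fixed so that all $M\beta_j\omega$ lie in a fixed lattice, set $Q_{0}=\tr(M\beta_0\omega)$ and $Q_i=-\tr(M\beta_i\omega)$. Then
$$Q_0\alp_i-Q_i=\sum_{\sigma\ne\mathrm{id}}\sigma(M\omega)\,(\ldots),$$
up to bounded factors. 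Hence $|Q_0\alp_i-Q_i|\ll\max_{\sigma\ne \mathrm{id}}|\sigma\omega|\asymp\omega^{-1/n}$, while $Q_0\asymp\omega$. So every dominant unit gives $Q_0^{1/n}|Q_0\alp_i-Q_i|\ll1$. Two things remain: to make $\log\omega\ll p^{(r+1)s}$, and to force the $p$-adic conditions. The first of these is a statement about the logarithmic embedding. The second is about the $p$-adic behaviour of the traces above.

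\textbf{Step 1 (p-adic congruences).} Work in $E\otimes\Q_p=\prod_{\mathfrak p|p}E_{\mathfrak p}$. Replace the unit group by the finite-index subgroup $U_1$ of units $\equiv1$ modulo a high fixed power of each $\mathfrak p$. On $U_1$ the $p$-adic logarithm $\log_p$ is an injective homomorphism into a lattice in $E\otimes\Q_p$ (injective up to torsion). We want $\omega$ with $\log_p\omega$ small. Write $\omega=\exp_p(x)$ with $x=\log_p\omega$; then $\omega=1+x+O(x^2)$, which is exactly the second-order expansion the introduction mentions. Compute $\tr(M\beta_0\omega)=\tr(M\beta_0)+\tr(M\beta_0x)+O(|x|^2)$. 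Here $\tr(M\beta_0)=M\cdot(\text{coefficient of }1)$ is a constant, and we can arrange it to vanish modulo high powers by a suitable choice of the shift.

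If $x$ lies in $p^sL$, then the linear term gives $|Q_j|_p\ll p^{-s}$ for all $j$. To gain the extra factor for $Q_0$, the plan is to impose one additional linear condition $\tr(\beta_0x)\equiv0$ modulo $p^{2s}$. Together with the quadratic term, which is automatically $O(p^{-2s})$, this yields $|Q_0|_p\ll p^{-2s}$. The constant terms must be handled separately. I would use $\omega$ replaced by $\omega-1$ combinations or $\omega\eta$ with a fixed unit $\eta$ so that $\tr(M\beta_j)$ cancels. Otherwise, I would use $Q$ built from $\omega^{p^k}-1$-type expressions; the expected route is to work with $(\omega-1)$, since the traces of $M\beta_j\cdot1$ are just integers and shifting $Q_j$ by them keeps the archimedean estimate.

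\textbf{Step 2 (counting / pigeonhole).} We need $\omega$ in the subgroup $\Gamma_s=\{u\in U_1:\log_p u\in p^sL,\ \tr(\beta_0\log_pu)\in p^{2s}\Z_p\}$. This subgroup has index $\ll p^{rs}\cdot p^{s}=p^{(r+1)s}$ in $U_1$, since $U_1$ has rank $r$ and we impose the extra congruence. We also need $\omega$ dominant, i.e. $\log|\sigma\omega|$ equal for all $\sigma\ne\mathrm{id}$ up to $O(1)$. In the real logarithmic space $\R^{r+1}$ with trace-zero hyperplane, the dominant units are those whose log-vector lies within bounded distance of the ray $\R_+v_0$, where $v_0$ is the direction $(n,-1,\ldots,-1)$ suitably weighted by $r_1,r_2$. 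Now $\log\Gamma_s$ is a lattice of covolume $\ll p^{(r+1)s}$ in an $r$-dimensional space. A bounded-width tube of length $T$ has volume $\asymp T$, so by Minkowski (or pigeonhole on cosets) it contains a lattice point once $T\gg p^{(r+1)s}$. This gives $\log Q_{0,s}\asymp\log\omega\ll p^{(r+1)s}$.

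\textbf{Main obstacle.} The delicate point is Step 2's uniformity. Minkowski's bound needs the lattice $\log\Gamma_s$ not to be too skewed relative to the tube. Otherwise the tube of length $\asymp$ covolume might miss it, and we would need covering radius, not just covolume. Since $\Gamma_s$ is defined by $p$-adic conditions with no archimedean information, I would instead use pigeonhole directly. Take the $\gg p^{(r+1)s}$ units $u_k$ of $U_1$ whose log-vectors lie near the points $k\,v_0$, for $k$ up to $Kp^{(r+1)s}$; these exist with bounded error because the unit lattice has bounded covering radius. Two of them, $u_k$ and $u_{k'}$, lie in the same coset of $\Gamma_s$. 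Then $\omega=u_{k'}/u_k$ lies in $\Gamma_s$, is dominant up to $O(1)$, and has $\log\omega\ll p^{(r+1)s}$. Positivity of $\log\omega$ follows by choosing $k'>k$. The final liminf follows directly: $|Q|_p(\log Q)^{2/(r+1)}\ll p^{-2s}p^{2s}=O(1)$.
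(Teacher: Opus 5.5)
Your proposal has a genuine gap in Step 1, in the treatment of the constant term, and as written it is fatal.

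First, a preliminary slip. Your integers $Q_0=\tr(M\beta_0\om)$, $Q_i=-\tr(M\beta_i\om)$ are, up to sign, the coordinates of $M\om$ in the basis $(1,\alp_1,\ldots,\alp_n)$. So they satisfy $Q_0-\sum_i Q_i\alp_i=M\om$, which is a large linear form, not a simultaneous approximation. The right integers are $Q_i=\tr(\gamma\alp_i\om)$ for a fixed $\gamma\in E$; these give $Q_0\alp_i-Q_i=\sum_{j\ge1}\sig_j(\gamma\om)(\alp_i-\sig_j(\alp_i))$.

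The real problem is the constant term. With $\gamma=M\beta_0$, the constant term of $Q_0$ is $\tr(\gamma)=M\tr(\beta_0\alp_0)=M\ne0$. Hence $|Q_0-M|_p\ll p^{-s}$, and $|Q_0|_p=|M|_p$ for large $s$, whatever congruences you impose on $\log_p\om$.

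Your proposed repair does not work. Replacing $\om$ by $\om-1$, i.e. subtracting the integers $c_i=\tr(\gamma\alp_i)$, changes $Q_0\alp_i-Q_i$ by the fixed real number $c_0\alp_i-c_i$. This is nonzero when $c_0\ne0$, because $\alp_i\notin\Q$; equivalently, $\sig_j(\om-1)\to-1$ rather than $0$. So the estimate $Q_0^{1/n}|Q_0\alp_i-Q_i|\ll1$ is lost.

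Also, the claim ``$|Q_j|_p\ll p^{-s}$ for all $j$'' cannot hold for any $\gamma\ne0$. The trace form is nondegenerate, so the $n+1$ constants $\tr(\gamma\alp_i)$ cannot all vanish.

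The missing idea is to take $\gamma$ on the line $\Q\beta_n$, as the paper does with $\gamma=D\beta_n$. By duality $\tr(\gamma\alp_i)=D\delta_{i,n}$, so the constant terms of $Q_0,\ldots,Q_{n-1}$ vanish identically, at no archimedean cost. The unavoidable nonzero constant lands in $Q_n$, which is exactly why the statement claims $p$-adic smallness only for indices $0\le i\le n-1$. Then $Q_i=D\sum_{j\ge1}\tau_j(\alp_i\beta_n)(\tau_j(u)-\tau_0(u))$ for $i\le n-1$. Your second-order argument then goes through, with the extra linear condition $\tr(\beta_n\log_p u)\in p^{2s}\Z_p$ (the paper's (3.9')) replacing the one involving $\beta_0$.

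With this correction, and $\Gamma_s$ redefined accordingly, your Step 2 is sound and is a genuinely different route from the paper.
\begin{itemize}
\item The paper pigeonholes exponent vectors under $r-1$ archimedean conditions at precision $p^{-s}$ plus one $p$-adic condition, obtaining $\om$ with $\max|m_l|\ll p^{rs}$. It then takes $u=\om^{p^s}$: raising to the power $p^s$ turns the archimedean precision $p^{-s}$ into dominance and produces the factor $p^s$ in $\log_p u$.
\item You instead pigeonhole directly on the $\ll p^{(r+1)s}$ cosets of $\Gamma_s$ along the dominant ray. This gives the same bound $\log u\ll p^{(r+1)s}$.
\end{itemize}
One small point in your version: space the target points by more than twice the covering radius of the unit lattice. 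Then the $u_k$ are distinct, so $u_{k'}/u_k\ne1$ and $\log(u_{k'}/u_k)>0$.
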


In \cite[Theorem 1.3]{Bg}, Bugeaud claims that

$$
\liminf_{Q \to + \infty} \, Q^{1/n}\max\{\Vert Q\alp_1\Vert,\ldots,\Vert Q\alp_n\Vert\}|Q|_p\log Q<+\infty,  $$ 
but his argument is incomplete. Namely, the constant $C_7$ on \cite[p. 79]{Bg} depends on $T$ and a correct 
computation yields only (with the notation of the present paper) the factor $(\log Q)^{1/n}$ as in (1.5). 
(actually,  we can get $(\log Q)^{1/r}$ by being more   
careful in \cite{Bg}). 

Note that, under the assumption of Theorem 1.2, the Schmidt Subspace Theorem implies that 
$$
\liminf_{Q \to + \infty} \, Q^{\eps + 1/n}\max\{\Vert Q\alp_1\Vert,\ldots,\Vert Q\alp_n\Vert\}|Q|_p = +\infty, 
$$
for every $\eps > 0$.  
We do not know whether Theorem 1.2 is optimal, however, we establish an 
analogue 
 of the result of 
\cite{dual4} mentioned above. 

\begin{Theorem} 
Under the assumptions of Theorem 1.2, for any positive real number $C$,     
there exist positive constants $\rho=\rho(C)$ and $K=K(C)$  such that for every $(n+1)$-tuple $(Q_0, Q_1, \ldots , Q_n)$ 
 of integers with  $Q_0 \ge 2$ and satisfying
$$Q_0^{1/n}|Q_0\alp_i-Q_i|\le C,  \quad  1\le i\le n,$$
we have
$$\max_{0\le i\le n-1}|Q_i|_p\ge K(\log Q_0)^{-\rho}.$$
\end{Theorem}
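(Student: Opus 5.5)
The plan is to translate the approximation condition into a statement about units of $E$, and the $p$-adic divisibility into a $p$-adic smallness statement. That smallness is then bounded below by a linear form in $p$-adic logarithms.

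\textbf{Step 1 (units).} As in the constructions for Theorems 1.1 and 1.2, fix the real embedding $\sigma_0=\mathrm{id}$ and the other embeddings $\sigma_1,\ldots,\sigma_n$ of $E$. Choose $\beta_0,\ldots,\beta_n\in E$ (a dual basis of $(1,\alp_1,\ldots,\alp_n)$ with respect to the trace) so that
$$\eta=Q_0\beta_0+\cdots+Q_n\beta_n$$
has every conjugate $\sigma_j(\eta)$, $j\ne0$, equal to a fixed linear combination of the quantities $Q_0\alp_i-Q_i$, while $|\eta|\asymp Q_0$. The hypothesis then gives $|\sigma_j(\eta)|\ll C\,Q_0^{-1/n}$, so $\eta$ is a nonzero element of a fixed lattice $M\subset E$ with $|N_{E/\Q}(\eta)|\ll_C 1$. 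Hence $\eta=\gamma\omega$, with $\gamma$ in a finite set (depending on $C$) and $\omega$ a unit. Writing $\omega=\zeta\,\eps_1^{a_1}\cdots\eps_r^{a_r}$ in a fixed system of fundamental units, where $r=r_1+r_2-1$, the bounds on the archimedean conjugates give $\max_j|a_j|\ll\log Q_0$.

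\textbf{Step 2 ($p$-adic translation).} Let $p^m$ be the largest power of $p$ dividing all of $Q_0,\ldots,Q_{n-1}$, up to a bounded factor, so that $\max_{0\le i\le n-1}|Q_i|_p\asymp p^{-m}$. Then in $E\otimes\Q_p$ we have $\gamma\omega-Q_n\beta_n\in p^mM\otimes\Z_p$. If $|Q_n|_p$ were small as well, $\eta$ would be highly divisible by $p$. That is impossible, since $N(\eta)$ takes finitely many values, so only $m$ bounded would remain. Otherwise $\omega$ is $p$-adically within $p^{-m+O(1)}$ of $x\beta_n/\gamma$ for some $x\in\Q_p^*$. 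Taking norms shows $x^{n+1}$ lies $p$-adically close to one of finitely many rationals, so $\omega$ lies within $p^{-m+O(1)}$ of one of finitely many algebraic points $t$. After raising to a fixed power $N$ (killing torsion and the residue group), apply the $p$-adic logarithm in each completion above $p$. This gives a component where
$$\Bigl|\sum_{j=1}^r Na_j\log_p\eps_j-\log_p t^N\Bigr|_p\le p^{-m+O(1)}.$$

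\textbf{Step 3 (nonvanishing and lower bound).} Yu's theorem on linear forms in $p$-adic logarithms of algebraic numbers gives, when the form is nonzero, a lower bound $\gg(\max|a_j|)^{-\kappa}\gg(\log Q_0)^{-\kappa}$. Here $\kappa$ and the constant depend only on $E$, $p$ and the finite sets fixed by $C$. This yields $p^{-m}\ge K(\log Q_0)^{-\rho}$, which is the claim.

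The main obstacle is the nonvanishing. One must rule out that $\omega^N$ equals $t^N$ exactly in the relevant completion, or in all completions simultaneously. I would handle it by choosing the component carefully, or by showing that exact vanishing in all components forces $\gamma\omega\in\Q_p\beta_n\cap E=\Q\beta_n$. That would give $Q_0=\cdots=Q_{n-1}=0$, contradicting $Q_0\ge2$. Some care is needed because only one completion above $p$ might be controlled. Possibly the cleanest route is to work with the vector of logarithms in $E\otimes\Q_p$, where Yu's bound still applies componentwise to the component realizing the nonvanishing.
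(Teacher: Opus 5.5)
Your route is sound, and the nonvanishing you flag as the main obstacle closes along your second suggestion, with two corrections.

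\textbf{Closing the nonvanishing step.} First, your worry that only one completion is controlled is unfounded. Since $\gamma\omega-Q_n\beta_n=\sum_{i<n}Q_i\beta_i$, you have $|\tau_j(\gamma\omega)-Q_n\tau_j(\beta_n)|_p\ll p^{-m}$ for every embedding $\tau_j\colon E\to\Om_p$ simultaneously. So $|\Lambda_j|_p\le p^{-m+O(1)}$ holds for all $j$, and you only need one $j$ with $\Lambda_j\ne0$.

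Second, write $t=\theta\beta_n/\gamma$ with $\theta^{n+1}=c$. If all $\Lambda_j$ vanish, you only get $\tau_j(\omega)^N=\tau_j(t)^N$. But then $\tau_j(\omega)/\tau_j(t)$ is a root of unity within $p^{-m+O(1)}$ of $1$. Any root of unity $\zeta\ne1$ satisfies $|\zeta-1|_p\ge p^{-1/(p-1)}$, so it equals $1$ once $m$ is large. Hence every conjugate of $\gamma\omega/\beta_n\in E$ equals $\theta$, so $\gamma\omega=\theta\beta_n$ with $\theta\in\Q$. Then $Q_0=\mathrm{Tr}(\gamma\omega)=\theta\,\mathrm{Tr}(\beta_n)=0$, a contradiction.

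Also, the passage from $|x^{n+1}-c|_p\le p^{-m+O(1)}$ to $|x-\theta|_p\le p^{-m+O(1)}$, with no loss in the exponent, needs one remark. The roots of $X^{n+1}=c$ are $p$-adically separated by a constant, which holds because $c$ runs over a finite set.

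\textbf{Comparison with the paper.} The paper writes $q_i=\mathrm{Tr}(\gam\eta_m\alp_i)$ using Peck's dominant units $\eta_m$ (Lemma 4.1). These are built from units $\om_l$ already satisfying $|\tau_j(\om_l)-1|_p<p^{-1/(p-1)}$, so no passage to $N$-th powers is needed. Instead of pinning down the scalar $Q_n$ through norms and $(n+1)$-th roots, the paper eliminates it by dividing by the $\tau_0$-conjugate. Lemma 4.3, which rests on the invertibility in Lemma 2.1, gives
$\max_{0\le i\le n-1}|q_i|_p\asymp\max_{1\le j\le n}|\tau_j(\xi)-\tau_j(\beta_n)\tau_0(\xi)/\tau_0(\beta_n)|_p$.

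This two-sided comparison buys three things:
\begin{itemize}
\item Nonvanishing is immediate: $q_0\ne0$ forces one difference to be nonzero.
\item The linear forms involve only the fixed numbers $\tau_j(\om_l)/\tau_0(\om_l)$ and $\tau_j(\beta_n)\tau_0(\gam)/(\tau_0(\beta_n)\tau_j(\gam))$.
\item A trivial case split (target ratio not within $p^{-1/(p-1)}$ of $1$) replaces your torsion-killing.
\end{itemize}

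Your norm trick achieves the same elimination. It costs the auxiliary algebraic number $\theta$, the root-separation step and the root-of-unity check, but needs neither dominant units nor Lemma 4.3. Your finiteness argument via principal ideals of bounded norm is equivalent to Lemma 4.2.
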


The present paper is organized as follows.
Theorems 1.1, 1.2, and 1.3 are established in Sections 2, 3, and 4, respectively. 
The final Section 5 is devoted to a short discussion on related questions.


\section{Proof of Theorem 1.1} 

Without any loss of generality, we assume that 
$$\eps_1 = \max_{1\le i\le n-1}\eps_i.   $$ 
Actually, we give an additional precision. We prove that there exists a positive constant $c$, depending 
only upon the numbers $\alp_1, \ldots ,\alp_n$, such that, given real numbers $\eps_1, \ldots, \eps_n$ 
with 
$$
\eps_1 = \max_{1\le i\le n-1}\eps_i, \quad 0<  \eps_1^2 \le \min_{1\le i\le n-1}\eps_i,  \eqno\hbox{(2.1)}  
$$ 
there exists an integer $Q \ge 2$ 
with $$\eps_1^{-c}\ll \log Q\ll {1\over\eps_1 \ldots \eps_{n-1}}\ ,\eqno\hbox{(2.2)}$$ such that 
$$
Q^{1/n}\Vert Q\alp_ i \Vert\ll \eps_ i  ,  \quad 1\le i \le n-1,\eqno\hbox{(2.2')}
$$ 
$$Q^{1/n}\Vert Q\alp_n\Vert\ll 1.\eqno\hbox{(2.2'')}
$$ 
There is no loss of generality in assuming that $\eps_1$ is small (in terms of  $\alp_1, \ldots ,\alp_n$). 
All the numerical constants $C_1, C_2, \ldots$ and those implicit in $\ll$ and $\gg$ depend only on $\alp_1, \ldots ,\alp_n$. 

The complex embeddings of $E$ are numbered as follows.
The embedding $\sig_0$ is the identity, 
$\sig_j$ is real for $1\le j\le r_1-1$, and $\sig_j$ is complex nonreal
 for $r_1\le j\le n$ with
$\sig_{j+r_2}=\overline{\sig_j}$ for $r_1\le j\le r_1+r_2-1$.
Recall that the group of units in $E$ has rank $r=r_1+r_2-1$. 
We shall use a set of $r$  multiplicatively independent units $\om_1, \ldots , \om_r$. 
By replacing if necessary $\om_l$ by $\om_l^2$,
we can suppose that for $l = 1, \ldots , r$ and $j = 0, \ldots , r_1 - 1$, 
we have $\sig_j(\om_l)>0$ 
(hence, in particular, $\om_l>0$). 
Set $\alpha_0 = 1$ and let $(\beta_ i )_{0\le i\le n}$ denote the dual basis of the basis 
$(\alp_ i )_{0\le  i \le n}$ of $E$. Consequently, we have 
$$\tr(\alp_ i \beta_j)=\delta_{ i ,j},\quad 0\le i,  j\le n, \eqno\hbox{(2.3)}$$ where $\delta_{ i ,j}=0$ if $ i \ne j$, and $\delta_{ i , i }=1$. 
Here and below, $\tr$ denotes the trace function defined by  
$$\tr=\sum_{0\le j\le n}\sig_j.$$
Let $D$ be a positive integer such that $D\alp_ i \beta_j$ is an algebraic integer in $E$ for every $i, j$ with $0\le  i , j \le n$. 
Let $w\ne0$ 
be an algebraic integer in $E$ and set 
$$Q_ i (w)=\tr(D\alp_ i \beta_nw),  \quad  0\le i \le n.\eqno\hbox{(2.4)}$$ 
The real numbers $Q_0, Q_1, \ldots , Q_n$  
are rational integers for $0\le i \le n$. We have 
$$Q_0(w)\alp_ i -Q_ i (w)=D\sum_{1\le j\le n}\sig_j(\beta_nw)(\alp_ i -\sig_j(\alp_ i )), \quad  1\le i \le n.\eqno\hbox{(2.4')}$$ 
It is important to note that, by (2.3), we have $$Q_ i (1)=0, \quad  0\le i \le n-1.$$ 
Thus, if, for $1\le i \le n$, we write 
$$Q_0(w)\alp_ i -Q_ i (w)=D\sig_1(w)\sum_{1\le j\le n}\sig_j(\beta_n)(\alp_ i -\sig_j(\alp_ i )){\sig_j(w)\over\sig_1(w)}\,,\eqno\hbox{(2.5)}$$ then we see that
$$\sum_{1\le j\le n}\sig_j(\beta_n)(\alp_ i -\sig_j(\alp_ i ))=0,
 \quad 1\le i \le n-1,$$
and we get, for $1\le i \le n-1$, 
$$Q_0(w)\alp_ i -Q_ i (w)=D\sig_1(w)\sum_{2\le j\le n}\sig_j(\beta_n)(\alp_ i -\sig_j(\alp_ i ))\Bigl({\sig_j(w)\over\sig_1(w)}-1\Bigr).\eqno\hbox{(2.5')}$$
We will search the integer $Q$ of Theorem 1.1 of the form $Q_0(\om)$, where $\om$ is a suitable unit.
 Formula (2.5') invites us  
to replace  
the term $\sig_j(\om)/\sig_1(\om)-1$ by $\log(\sig_j(\om)/\sig_1(\om))$, which is easier to compute in terms of units.  

First we need the following lemma.

\begin{Lemma}
The $(n-1)\times(n-1)$-matrix
$$
{ \mathcal A}=
\begin{pmatrix}
\sig_2(\alp_1)-\alp_1  & \ldots &\sig_n(\alp_1)-\alp_1 \\ 
\ldots & \ldots  &\ldots \\ 
\sig_2(\alp_{n-1})-\alp_{n-1}  &  \ldots  &\sig_n(\alp_{n-1})-\alp_{n-1} 
\end{pmatrix}
$$
is invertible.  
\end{Lemma}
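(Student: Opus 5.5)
We derive the invertibility of $\mathcal A$ from the invertibility of the full matrix of conjugates of the basis $(\alp_i)_{0\le i\le n}$, combined with the orthogonality relations (2.3) for the dual basis $(\beta_i)$.

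\emph{Step 1 (the $n\times n$ matrix).} Since $(\alp_0,\ldots,\alp_n)$ with $\alp_0=1$ is a basis of $E$ over $\Q$ and $\sig_0,\ldots,\sig_n$ are the $n+1$ distinct embeddings, the matrix $(\sig_j(\alp_i))_{0\le i,j\le n}$ is invertible. Its discriminant is nonzero, or equivalently the embeddings are linearly independent by Dedekind/Artin. Subtract column $0$ (which is $(\alp_i)_i$, since $\sig_0$ is the identity) from every other column. Row $0$ becomes $(1,0,\ldots,0)$. Expanding along it shows that the $n\times n$ matrix
$$\mathcal B=\bigl(\sig_j(\alp_i)-\alp_i\bigr)_{1\le i,j\le n}$$
is invertible.

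\emph{Step 2 (a kernel-type relation).} Put $v=(\sig_j(\beta_n))_{1\le j\le n}$. By (2.3), $\sum_{0\le j\le n}\sig_j(\alp_i)\sig_j(\beta_n)=\tr(\alp_i\beta_n)=\delta_{i,n}$. Also $\sum_{0\le j\le n}\sig_j(\beta_n)=\tr(\alp_0\beta_n)=0$, because $n\ge1$. Hence, for $1\le i\le n$,
$$\sum_{1\le j\le n}\sig_j(\beta_n)\bigl(\sig_j(\alp_i)-\alp_i\bigr)=(\delta_{i,n}-\alp_i\beta_n)-\alp_i(0-\beta_n)=\delta_{i,n}.$$
In particular, each of the first $n-1$ rows of $\mathcal B$ is orthogonal to $v$; this is the identity already used before (2.5'). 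Moreover $\sig_1(\beta_n)\neq0$, since $\beta_n\neq0$.

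\emph{Step 3 (conclusion).} Suppose $\mathcal A$ is singular. Then there is a nonzero $\lambda=(\lambda_1,\ldots,\lambda_{n-1})\in\C^{n-1}$ with
$$\sum_{1\le i\le n-1}\lambda_i\bigl(\sig_j(\alp_i)-\alp_i\bigr)=0\quad\hbox{for } 2\le j\le n.$$
Consider the row vector $u=\sum_{i\le n-1}\lambda_i\cdot(\hbox{row } i \hbox{ of } \mathcal B)$. All of its coordinates vanish except possibly the first, $u_1$. By Step 2, $0=u\cdot v=u_1\sig_1(\beta_n)$, so $u_1=0$ and $u=0$. This is a nontrivial linear relation among rows of $\mathcal B$, contradicting Step 1. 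Hence $\mathcal A$ is invertible.

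There is no real obstacle. The only point requiring care is Step 2: one must use that $\tr(\beta_n)=0$, which holds because $\alp_0=1$ and $n\ne0$, together with $\sig_1(\beta_n)\ne0$. This is exactly what allows the column $j=1$ to be eliminated.
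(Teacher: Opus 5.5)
Your proof is correct, but it is not the paper's argument.

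The paper argues constructively. It writes down the matrix with entries $\sig_k(\beta_j)-\frac{\sig_1(\beta_j)}{\sig_1(\beta_n)}\sig_k(\beta_n)$ ($2\le k\le n$, $1\le j\le n-1$) and checks directly that it is a right inverse of $\mathcal A$. To do so it extends the sum over $k$ to $0\le k\le n$ (the two added terms vanish) and applies (2.3). This uses the whole dual basis, and the nonvanishing of $\det(\sig_j(\alp_i))$ never appears as a separate input; it is hidden in the existence of $(\beta_i)$.

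You argue by rank instead. Invertibility of the full matrix $(\sig_j(\alp_i)-\alp_i)_{1\le i,j\le n}$ comes from the conjugate determinant, the same reduction the paper performs in the proof of Lemma 4.2. A single relation, involving only $\beta_n$, places its first $n-1$ rows in the hyperplane orthogonal to $(\sig_j(\beta_n))_{1\le j\le n}$. On that hyperplane, dropping the first coordinate is injective since $\sig_1(\beta_n)\neq0$.

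The two arguments are two faces of the same linear algebra. The inverse of your $n\times n$ matrix is $(\sig_j(\beta_k))_{1\le j,k\le n}$. The paper's matrix is exactly the rank-one-corrected formula for the inverse after deleting row $n$ and column $1$, and its pivot is $\sig_1(\beta_n)$, the very quantity your argument hinges on.

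Your version is more conceptual. It shows immediately that deleting any column $j_0\in\{1,\ldots,n\}$ (keeping rows $1,\ldots,n-1$) also gives an invertible matrix, since every $\sig_{j_0}(\beta_n)$ is nonzero. The paper's version has the advantage of producing $\mathcal A^{-1}$ explicitly, although only its existence is used later.
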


\begin{proof} 
Let us define the matrix 
$$
{ \mathcal B}=
\begin{pmatrix}
\sig_2(\beta_1)-{\sig_1(\beta_1)\over\sig_1(\beta_n)}\sig_2(\beta_n) &
  \ldots   & \sig_2(\beta_{n-1})-{\sig_1(\beta_{n-1})\over\sig_1(\beta_n)}\sig_2(\beta_n)  \\  \ldots &  \ldots &  \ldots 
    \\ \sig_n(\beta_1)-{\sig_1(\beta_1)\over\sig_1(\beta_n)}\sig_n(\beta_n)&
  \ldots   & \sig_n(\beta_{n-1})-{\sig_1(\beta_{n-1})\over\sig_1(\beta_n)}\sig_n(\beta_n) 
 \end{pmatrix} . 
$$
We will check that ${ \mathcal AB}$ is the identity matrix. 
We have to prove that 
$$\sum_{2\le k\le n}(\sig_k(\alp_ i )-\alp_ i )\Bigl(\sig_k(\beta_j)-{\sig_1(\beta_j)\over\sig_1(\beta_n)}\sig_k(\beta_n)\Bigr)=\delta_{ i ,j}, \quad 1 \le i, j \le n-1. \eqno\hbox{(2.6)}$$We have
$$\sum_{2\le k\le n}(\sig_k(\alp_ i )-\alp_ i )\Bigl(\sig_k(\beta_j)-{\sig_1(\beta_j)\over\sig_1(\beta_n)}\sig_k(\beta_n)\Bigr)=\sum_{0\le k\le n}(\sig_k(\alp_ i )-\alp_ i )\Bigl(\sig_k(\beta_j)-{\sig_1(\beta_j)\over\sig_1(\beta_n)}\sig_k(\beta_n)\Bigr),$$
since the additional terms in the sum on the right hand side are zero. 
Note that $$\sum_{0\le k\le n}(\sig_k(\alp_ i )-\alp_ i )\Bigl(\sig_k(\beta_j)-{\sig_1(\beta_j)\over\sig_1(\beta_n)}\sig_k(\beta_n)\Bigr)=\tr(\alp_ i \beta_j)-\alp_ i \tr(\beta_j)-{\sig_1(\beta_j)\over\sig_1(\beta_n)}(\tr(\alp_ i \beta_n)-\alp_ i \tr(\beta_n)).$$
By (2.3), we see that $\tr(\beta_j)=\tr(\alp_ i \beta_n)=0$ for $1\le i \le n-1$ and $1\le j\le n$, thus, (2.6) is proved. 
Actually, Lemma 2.1 is similar to Lemmas 2.3 and 2.4 in \cite{dual4}. 
\end{proof} 

Let $\eps_1, \ldots , \eps_{n-1}$ be real numbers satisfying (2.1), thus, with 
$$\eps_1 = \max_{1\le i\le n-1}\eps_i.$$
We construct a unit 
$$\om=\om_1^{m_1} \ldots \om_r^{m_r},$$ 
where $(m_1,\ldots,m_r)$ is in $\Z^r\bsl\{(0,\ldots,0)\}$
and satisfies 
$$\Bigl\vert\sum_{2\le j\le n}\sig_j(\beta_n)(\alp_ i -\sig_j(\alp_ i ))\Bigl({\sig_j(\om)\over\sig_1(\om)}-1 \Bigr)\Bigr\vert\ll \eps_ i ,  \quad 1\le i \le n-1.\eqno\hbox{(2.7)}$$ In order to obtain this condition, we use an analogue of the method 
introduced by Peck \cite{Pe}. 
We shall construct complex numbers $Y_j$ such that ${\sig_j(\om)\over\sig_1(\om)}= \rme^{Y_j}$, while requiring that $|Y_j|$ is small, in order to approach $ \rme^{Y_j}-1$ by $Y_j$, for $j= 2, \ldots , n$. 

We can avoid that $\sig_j(\om_l)$ is a negative real number for some $j$ with $r_1\le j\le n$, 
by replacing if necessary $\om_l$ by $\om_l^{2^\nu}$ where $\nu$ is a suitable positive integer. Clearly $\nu=r_2$ is convenient in all cases. 

For $j=1, \ldots , n$ and $l = 1, \ldots , r$, we define
 $\log\sig_j(\om_l)$ as being the  
 complex number such that 
 $$\rme^{\log\sig_j(\om_l)}=\sig_j(\om_l)
 $$ 
and whose argument is in $(- \pi, \pi]$. 
Recalling that  $\sig_j(\om_l)$ is a positive real number for every $j$ with $1 \le j < r_1$ and that
  $\sig_{j+r_2}(\om_l)=\overline{\sig_j(\om_l})$ for $r_1\le j<r_1+r_2$, we have
 $$\log\sig_j(\om_l)\in\R, \quad  1\le j<r_1,\eqno\hbox{(2.9)}$$
 and, since $\sig_j(\om_l)$ is never a negative real number,
 $$\log\sig_{j+r_2}(\om_l)=\overline{\log\sig_j(\om_l)}, \quad  r_1\le j<r_1+r_2.\eqno\hbox{(2.9')}$$

Let us set 
$$A_{ i ,j} = \sig_j(\beta_n)(\alp_ i -\sig_j(\alp_ i )),   \quad 0\le i,  j\le n.$$ 
Note that $A_{0,j}=0$ for $0\le j\le n$, and $A_{ i ,0}=0$ for $0\le i \le n$.
 
We construct a unit $\om$ satisfying (2.7) by applying the following lemma.

\begin{Lemma} 
There exist an $r$-tuple $(m_1,\ldots,m_r)$ of integers, not all zero, and an $n$-tuple
$(k_1,\ldots,k_n)$ in $\Z^n$, with
$$k_j=0\quad{\rm if}\quad1\le j<r_1, \quad  k_j+r_2=-k_j,\quad{\rm if}\quad r_1\le j\le r_1+r_2-1,\eqno\hbox{\rm(2.10)}$$
such that 
$$\Bigl\vert\sum_{2\le j\le n}A_{ i ,j}\sum_{1\le l\le r}m_l(\log\sig_j(\om_l)-\log\sig_1(\om_l))+2  \rmi \pi \sum_{2\le j\le n} A_{ i ,j}(k_j-k_1) \Bigr\vert\le \eps_ i , 
\quad  i =1,\ldots,n-1, \eqno\hbox{\rm(2.11)}$$
and $$\max_{1\le l\le r}|m_l|\ll \frac1{\eps_1\cdots\eps_{n-1}}. \eqno\hbox{\rm(2.12)}$$
\end{Lemma}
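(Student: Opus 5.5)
We will obtain Lemma 2.2 from Minkowski's linear forms theorem (equivalently, a pigeonhole/Dirichlet argument), applied in a lattice built from the logarithmic embedding of the unit group together with the $2\rmi\pi$-periods.

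\textbf{Reduction to a real problem.} For each unit $\om=\om_1^{m_1}\cdots\om_r^{m_r}$ and each $j$, set
$$L_j(m,k)=\sum_{1\le l\le r}m_l\log\sig_j(\om_l)+2\rmi\pi k_j,$$
so that $L_j$ is a logarithm of $\sig_j(\om)$. By (2.9) and (2.9'), together with the constraint (2.10), we have $L_j\in\R$ for $1\le j<r_1$ and $L_{j+r_2}=\overline{L_j}$. The quantity inside (2.11) is
$$\Lambda_i=\sum_{2\le j\le n}A_{i,j}\,(L_j-L_1).$$
The coefficients $A_{i,j}$ satisfy $A_{i,j+r_2}=\overline{A_{i,j}}$, because $\alp_i$ and $\beta_n$ are real. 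Hence each $\Lambda_i$ is real. The free integer parameters are $m_1,\ldots,m_r$ and $k_{r_1},\ldots,k_{r_1+r_2-1}$; note $k_1=0$ since $r_1\ge1$.

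The map $(m,k)\mapsto (L_1,\ldots,L_n)$ has image of the following shape:
\begin{itemize}
\item The real parts $\mathrm{Re}\,L_j=\sum_l m_l\log|\sig_j(\om_l)|$ give, together with $j=0$, the standard logarithmic lattice. This lattice has rank $r$ in the hyperplane of vectors with zero sum.
\item The imaginary parts contribute $r_2$ further real coordinates, which the $k_j$ reduce modulo $2\pi$.
\end{itemize}
Thus the image is a lattice $\Gamma$ of rank $r+r_2=n$ in $\R^{n}$, naturally identified with $\{(x_1,\ldots,x_n)\}$, the data $x_0$ being determined by the zero-sum condition.

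\textbf{The linear forms.} We work with the $n$ real coordinates
$$\mathrm{Re}\,L_j\ (1\le j<r_1+r_2),\qquad \mathrm{Im}\,L_j\ (r_1\le j<r_1+r_2).$$
On these we impose the following $n$ linear conditions:
$$|\Lambda_i|\le\eps_i\quad(1\le i\le n-1),\qquad |\mathrm{Re}\,L_1|\le T .$$
The first $n-1$ forms are real linear combinations of the $n-1$ quantities $L_j-L_1$, $2\le j\le n$, which can be packaged as $n-1$ real coordinates. By Lemma 2.1, the matrix $(A_{i,j})=\mathcal A$ multiplied by the diagonal matrix $\mathrm{diag}(\sig_j(\beta_n))$ is invertible. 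Therefore the $n$ forms $\Lambda_1,\ldots,\Lambda_{n-1},\mathrm{Re}\,L_1$ are linearly independent on $\R^n$. Composing with a basis of $\Gamma$ gives $n$ independent real linear forms in the $n$ integer variables $(m,k)$.

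\textbf{Applying Minkowski.} Minkowski's theorem yields a nonzero $(m,k)$ as soon as
$$\eps_1\cdots\eps_{n-1}T\gg \mathrm{covol}(\Gamma)\asymp1 .$$
We choose $T\asymp(\eps_1\cdots\eps_{n-1})^{-1}$. Once the bound (2.12) on $m$ is established, the boxed conditions give (2.11). For (2.12), we bound the variables:
\begin{itemize}
\item $|\mathrm{Re}\,L_1|\le T$.
\item $|L_j-L_1|\ll\max_i\eps_i\le1$, by inverting the matrix via Lemma 2.1.
\end{itemize}
So every coordinate of the lattice vector is $\ll T$. Since $\log|\sig_j(\om_l)|$, $1\le j\le r$, form an invertible matrix by the regulator, we get $\max|m_l|\ll T$, which is (2.12).

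\textbf{Main obstacle.} We must ensure $m\ne0$. If $m=0$, the vector is $2\rmi\pi(k_j)$ with $k$ nonzero, so some $|\mathrm{Im}(L_j-L_1)|\ge2\pi$. This contradicts $|L_j-L_1|\ll\eps_1$ once $\eps_1$ is small enough, and we assumed $\eps_1$ small. Care is also needed to verify that the conjugate-pair structure makes the forms genuinely real and independent; this is exactly where Lemma 2.1 enters.
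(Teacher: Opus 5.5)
Your argument is sound in outline, but it is organized differently from the paper. The paper pigeonholes directly on the box $0\le m_l,k_j\le M$. The $(M+1)^n$ tuples give real points $X=(X_i)_{1\le i\le n-1}$ in $[-C_1M,C_1M]^{n-1}$. With $M=(2C_1)^{n-1}/(\eps_1\cdots\eps_{n-1})$, this cube is covered by $M^n$ boxes of sides $\eps_i$, and a difference gives (2.11) with (2.12) built in.

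You instead apply Minkowski's linear forms theorem on the rank-$n$ lattice of logarithms. The auxiliary form $\mathrm{Re}\,L_1=\log|\sig_1(\om)|$ replaces the box, and you recover (2.12) afterwards by inverting $(A_{i,j})_{1\le i\le n-1,\,2\le j\le n}$ and the regulator matrix.

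Your version needs more verification: full rank of $\Gamma$, independence of the $n$ forms, and the a posteriori bound on $m$. In exchange, it bounds $\log|\sig_1(\om)|$, hence $\log\om$, directly. The paper's counting step is more elementary and needs nothing beyond the reality of the $X_i$. The exclusion of $m=0$ is the same in both.

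For independence, your phrase ``real linear combinations'' is inaccurate, since the $A_{i,j}$ are complex. The clean statement is this: because $(A_{i,j})$ is invertible over $\C$, the common kernel of $\Lambda_1,\ldots,\Lambda_{n-1}$ on $\R^n$ is the line $L_1=\cdots=L_n\in\R$, and $\mathrm{Re}\,L_1$ does not vanish on that line.

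There is, however, one concrete error. The claim ``$k_1=0$ since $r_1\ge1$'' is false when $r_1=1$, for example a cubic field such as $\Q(2^{1/3})$ with $n=2$. In that case $\sig_1$ is non-real, $k_1$ is one of your free variables with $k_{1+r_2}=-k_1$, and $L_1\notin\R$. Two of your steps use $k_1=0$ or $L_1\in\R$ without justification in this case.

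First, conjugate symmetry alone does not make $\Lambda_i=\sum_{2\le j\le n}A_{i,j}(L_j-L_1)$ real, because the index $j=1$ is missing from the sum. You need the identity
$$\sum_{1\le j\le n}A_{i,j}=\alp_i\tr(\beta_n)-\tr(\alp_i\beta_n)=0,\quad 1\le i\le n-1,$$
which gives $\Lambda_i=\sum_{1\le j\le n}A_{i,j}L_j$, visibly real.

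Second, when $m=0$ and $k\ne0$ you must still find $j\ge2$ with $k_j\ne k_1$. If $k_j=k_1$ for all $2\le j\le n$, then taking $j=1+r_2$ gives $k_1=-k_1$, so $k=0$, a contradiction.

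Both repairs are exactly what the paper does in its proof, and with them your argument is complete.
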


\begin{proof}
First, note that, for $l = 1, \ldots , r$,  we can write 
$$\sum_{2\le j\le n}A_{ i ,j}(\log\sig_j(\om_l)-\log\sig_1(\om_l))=\sum_{1\le j\le n}A_{ i ,j}(\log\sig_j(\om_l)-\log\sig_1(\om_l))=\sum_{1\le j\le n}A_{ i ,j}\log\sig_j(\om_l)$$ since, as $A_{ i ,0}=0$, we have
$$\sum_{1\le j\le n}A_{ i ,j}=\sum_{0\le j\le n}A_{ i ,j}=\alp_ i \tr(\beta_n)-\tr(\alp_ i \beta_n)=0, \quad 1\le i \le n-1.$$ Similarly, $$\sum_{2\le j\le n} A_{ i ,j}(k_j-k_1)=\sum_{1\le j\le n} A_{ i ,j}k_j, \quad 1\le i \le n-1.$$ 
Thus, condition (2.11) can be written
$$\bigl\vert\sum_{1\le j\le n}A_{ i ,j}\sum_{1\le l\le r}m_l\log\sig_j(\om_l)+2  \rmi \pi \sum_{1\le j\le n} A_{ i ,j}k_j \bigr\vert\le \eps_ i ,  \quad 1\le i \le n-1.\eqno\hbox{\rm(2.11')}$$
Note that $\sum_{1\le j\le n}A_{ i ,j}\log\sig_j(\om_l)$ is a real number, since, if $1\le j<r_1$, then $A_{ i ,j}$ and $\log\sig_j(\om_l)$ are real numbers (recall that $\sig_j(\om_l)$ is a positive real number), and if $r_1\le j<r_2$, then $$A_{ i ,j+r_2}=\overline{A_{ i ,j}}, \quad  \log\sig_{j+r_2} (\om_l)=\overline{\log\sig_j(\om_l)}.$$ Similarly, 
for an $r$-tuple $(k_j)_{1\le j\le n}$ satisfying (2.10), the complex number $2  \rmi \pi \sum_{1\le j\le n} A_{ i ,j}k_j$ is real, since
 $$\sum_{1\le j\le n} A_{ i ,j}k_j=\sum_{r_1\le j<r_1+r_2} (A_{ i ,j}-\overline{A_{ i ,j}})k_j
 = 2 \rmi \Im \sum_{r_1\le j<r_1+r_2} A_{ i ,j}k_j.$$ 
Let $M$ be a positive integer. 
The number of $n$-tuples $(m_1,\ldots,m_r,k_{r_1},\ldots,k_{r_1+r_2-1})$ in $\Z^n$ with $0\le m_l\le M$ and $0\le k_j\le M$ 
for $1\le l\le r$ and $r_1\le j\le r_1+r_2-1$ is $(M+1)^n$, thus greater than $M^n$. 
For such an $n$-tuple, let us set $k_j=0$ for $1\le j<r_1$ and $k_{j+r_2}=-k_j$ for $r_1\le j\le r_1+r_2-1$. 
Thus we get an $n$-tuple $(k_1,\ldots,k_n)$ in $\Z^n$ which satisfies (2.10). Then consider the point $X=(X_ i)_{1\le i \le n-1}$ in $\R^{n-1}$, where
 $$X_ i =\sum_{1\le j\le n}A_{ i ,j} \bigl(\sum_{1\le l\le r}m_l\log\sig_j(\om_l)+2  \rmi \pi k_j \bigr), \quad  1\le i \le n-1.$$ Let $C_1$ be an integer such that $$\max_{1\le i \le n-1}\Bigl(\sum_{1\le j\le n}\sum_{1\le l\le r}|A_{ i ,j}|| \log\sig_j(\om_l)|+2\pi\sum_{1\le j\le n}|A_{ i ,j}|\Bigr)\le C_1.$$
 Then the point $X$ belongs to $[-C_1M,C_1M]^{n-1}$. As we can suppose that $1/\eps_ i $ is an integer for each $ i $, this set can be covered by $(2C_1M)^{n-1}/ (\eps_ 1 \cdots \eps_{n-1})$ sets of the form $\prod_{1\le i \le n-1}[a_ i ,a_ i +\eps_ i ]$. Accordingly, if we take $$M={(2C_1)^{n-1}\over \eps_ 1 \cdots \eps_{n-1} },$$ then the pigeon-hole principle shows, by difference, that there exists $(m_1,\ldots,m_r,k_{r_1},  \ldots,k_{r_1+r_2-1})$ in $\Z^n$, different from $(0,\ldots,0)$, with
 $$\max_{1\le l\le r}|m_l|\le M, \quad  \max_{r_1\le j\le r_1+r_2-1}|k_j|\le M,$$ for which we have
 $$|X_ i |\le\eps_ i, \quad  i =1,\ldots,n-1. $$
 Thus we have proved (2.11') and (2.12), together with (2.10). We still must prove
that if $\eps_1$ is sufficiently small, then necessarily $(m_1,\ldots,m_r)$ is not equal to $(0,\ldots,0)$. 
For this, suppose that $m_l=0$ for each $l$ with $1\le l\le r$. Since (2.11) is equivalent to (2.11'), we have then
 $$\Bigl|\sum_{2\le j\le n} A_{ i ,j}(k_j-k_1) \Bigr\vert\le{\eps_1\over2\pi}, \quad 1\le i \le n-1. \eqno\hbox{(2.13)}$$ 
 Now, by Lemma 2.1, the matrix $(A_{ i ,j})_{1\le i \le n-1,\,2\le j\le n}$  
 is invertible, thus there exists a constant $C_2$ for which (2.13) implies that
 $$\max_{2\le j\le n}|k_j-k_1|\le C_2\eps_1.$$ Thus if $\eps_1<1/C_2$, we get $|k_j-k_1|<1$, hence $k_j-k_1=0$, for $2\le j\le n$. If $r_1>1$, we have $k_1=0$ by (2.10), and then $k_j=0$ for each $2\le j\le n$. This conclusion holds if $r_1=1$, since in this case, we have $k_{1+r_2}=k_1$, while by (2.10), $k_{1+r_2}=-k_1$, thus we also get $k_1=k_j=0$ for $2\le j\le n$. But this is impossible since we would have $(m_1,\ldots,m_r,k_{r_1},\ldots,k_{r_1+r_2-1})=(0,\ldots,0,\ldots0)$.
 \end{proof}

We are in position to complete the proof of the theorem.  
Let $(m_1,\ldots,m_r,k_{r_1},\ldots,k_{r_1+r_2-1})$ be an $n$-tuple satisfying Lemma 2.2, with $(m_1,\ldots,m_r)\ne(0,\ldots,0)$, and consider
$$\om=\om_1^{m_1}.\ldots \om_r^{m_r}.\eqno\hbox{(2.14)}$$ 
This number is a unit in $E$, and $\om\ne1$ since $\om_1, \ldots ,\om_r$ are multiplicatively independent. As we can replace the $n$-tuple $(m_1,\ldots,m_r,k_{r_1},\ldots,k_{r_1+r_2-1})$ by its opposite, we can suppose that $\om>1$. Then, consider the complex numbers
$$Y_j=\sum_{1\le l\le r}  \bigl( m_l(\log\sig_j(\om_l)-\log\sig_1(\om_l))+2  \rmi \pi (k_j-k_1) \bigr), \quad  2\le j\le n.\eqno\hbox{(2.15)}$$
We have 
$$ \rme^{Y_j}={\sig_j(\om)\over\sig_1(\om)}, \quad 2 \le j \le n.   \eqno\hbox{(2.15')}$$
Condition (2.11) means that 
$$\Bigl|\sum_{2\le j\le n}A_{ i ,j}Y_j  \Bigr|\le\eps_ i , \quad 1\le i \le n-1. \eqno\hbox{(2.16)}$$ 
As the matrix $(A_{ i ,j})_{1\le i \le n-1,\,2\le j\le n}$ is invertible, (2.16) implies that
$$|Y_j|\ll\eps_1, \quad 2\le j\le n.\eqno\hbox{(2.17)}$$ Now, by (2.15') and (2.17),
$${\sig_j(\om)\over\sig_1(\om)}=1+Y_j+O(\eps_1^2), \quad 2\le j\le n, 
\eqno\hbox{(2.18)}$$ (where the constant in the $O$ only depends upon $\alp_1,\ldots,\alp_n$).
Then, set
$$Q_ i =Q_ i (\om)=D\tr(\alp_ i \beta_n\om), \quad  0\le i \le n.\eqno\hbox{(2.4')}$$
Recalling that 
$$\eps_1^2\le\eps_ i , \quad 1\le i \le n-1,\eqno\hbox{(2.1')}$$ we deduce from (2.5'), (2.16), (2.17), and (2.18) that 
$$|Q_0(\om)\alp_ i -Q_ i (\om)|\ll|\sig_1(\om)|\eps_ i , \quad 1\le i \le n-1.\eqno\hbox{(2.19)}$$ 
As $0<\eps_1\le1$,   
(2.17) and (2.18) imply that $$\left\vert{\sig_j(\om)\over\sig_1(\om)}\right|\ll1, \quad 1\le j\le n, \eqno\hbox{(2.20)}$$ and thus, by (2.5), we obtain also
$$|Q_0(\om)\alp_n-Q_n(\om)|\ll|\sig_1(\om)|.\eqno\hbox{(2.19')}$$ 
If $\eps_1$ is sufficiently small (in terms of $\alp_1 , \ldots, \alpha_n$), 
then,  
 by (2.17) and (2.18), we have 
 $$\frac12\le\left|{\sig_j(\om)\over\sig_1(\om)}\right|\le\frac32, \quad 1 \le j \le n,$$ and,    
as $$\prod_{0\le j\le n}\sig_j(\om)=1,$$ we get 
$$|\sig_j(\om)|\asymp\om^{-1/n}, \quad  1\le j\le n.\eqno\hbox{(2.21)}$$
Moreover, if $\eps_1$ is small, then $\om$ is large. 
Indeed, 
by (2.15) and (2.17), 
there exists an integer $j$ with $2\le j\le n$ such that  
$$0<\Bigl|\sum_{1\le l\le r}m_l(\log\sig_j(\om_l)-\log\sig_1(\om_l))+2  \rmi \pi (k_j-k_1)\Bigr|\ll\eps_1. \eqno\hbox{(2.21')}$$
Otherwise,    
if $Y_j=0$ for every $j = 2, \ldots ,n$, we would get $\sig_j(\om)=\sig_1(\om)$ for $j=2,\ldots,n$, thus $\om$ would be a rational number, and $\om=1$, 
a contradiction.   
Thus, there is at least one integer $j$ with $2\le j\le n$ such that $0<|Y_j|\ll \eps_1$.     
Then, 
the set of possible integers $j$ being finite, a classical theorem of Baker \cite[Theorem 1.11]{Bu18} ensures that 
$$
\max\{|m_1|, \ldots , |m_r|, |k_j - k_1| \} \gg\eps_1^{-c},
$$ 
for a suitable positive constant $c$, which depends only upon $\alp_1,\ldots,\alp_n$. 
Since (2.21') implies that $|k_j - k_1| \ll \max\{|m_1|, \ldots , |m_r| \}$, we get 
$$
\max\{|m_1|, \ldots , |m_r|  \} \gg\eps_1^{-c}. 
$$
Then, consider the system equalities
$$m_1\log|\sig_j(\om_1)|+\ldots +m_r\log|\sig_j(\om_r)|=\log|\sig_j(\om)|, \quad  1\le j\le r.$$ 
Since the units $\om_1, \ldots , \om_r$ are independent, the matrix $(\log |\sigma_j(\om_i)|)_{1 \le i, j \le r}$ is invertible and we have 

$$\max_{1\le l\le r}|m_l|\asymp\max_{1\le j\le r}|\log|\sig_j(\om)||,$$ hence, in view of (2.21),
$$\log\om\asymp\max_{1\le l\le r}|m_l|\gg\eps_1^{-c}.\eqno\hbox{(2.22)}$$
Thus, $\om$ is large when $\eps_1$ is small in terms of $\alp_1,\ldots,\alp_n$, and if we write (2.4) explicitly: 
$$Q_0=D\Bigl(\beta_n\om+\sum_{1\le j\le n}\sig_j(\beta_n)\sig_j(\om)\Bigr),$$ then, by (2.21), we see that
 $$|Q_0|\asymp\om.$$ By (2.21), we then have
 $$|\sig_1(\om)|\asymp|Q_0|^{-1/n},$$ accordingly we get (2.2') and (2.2'') by (2.19) and (2.19'). Lastly, we deduce (2.2) from (2.22) and (2.12).  
 Thus Theorem 1.1 is proved.

 
\section{Proof of Theorem 1.2}
We keep the same notation as above.  
All the numerical constants $C_1, C_2, \ldots$ and those implicit in $\ll$ and $\gg$ 
depend only on $\alp_1, \ldots ,\alp_n$, and $p$.  
For $s \ge 0$ and $i=0, \ldots , n$, we shall find the integers $Q_{i,s}$ of the form
$$Q_{i,s}=D\tr(\alp_i\beta_nu),\eqno\hbox{(3.1)}$$ where $u=u(s)$ is a unit we will construct. 
Let $\tau_0, \tau_1, \ldots , \tau_n$ be the embeddings of $E$ in  the  
algebraic closure ${\bf\Omega}_p$ of $\Q_p$ ($\Om_p$ is equipped with the 
absolute value $| \cdot |_p$  extending the $p$-adic absolute value on $\Q_p$). Since the traces can be calculated in $\Om_p$ as well as in $\C$, we can write in $\C$
$$Q_{i,s}=D\sum_{0\le j\le n}\sig_j(\alp_i\beta_nu), \quad  0\le i\le n,\eqno\hbox{(3.2)}$$ 
and in $\Om_p$
$$Q_{i,s}=D\sum_{0\le j\le n}\tau_j(\alp_i\beta_nu),  \quad  0\le i\le n.\eqno\hbox{(3.3)}$$ 
As in (2.5), we write in $\C$
$$Q_{0,s}\alp_i-Q_{i,s}=D\sig_1(u)\sum_{1\le j\le n}\sig_j(\beta_n)(\alp_i-\sig_j(\alp_i)) 
\frac{\sig_j(u)}{\sig_1(u)}\,, \quad 1\le i\le n, \eqno\hbox{(3.4)}$$ while, in $\Om_p$,
as $\tr(\alp_i\beta_n)=0$ for $i = 0, \ldots , n-1$, we get 
$$Q_{i,s}=D\sum_{1\le j\le n}\tau_j(\alp_i\beta_n)(\tau_j(u)-\tau_0(u)), \quad  0\le i\le n-1.\eqno\hbox{(3.5)}$$ 
Thus, we have
$$Q_{i,s}=D\tau_0(u)\sum_{1\le j\le n}\tau_j(\alp_i\beta_n)\Bigl({\tau_j(u)\over\tau_0(u)}-1\Bigr), \quad  0\le i\le n-1.\eqno\hbox{(3.6)}$$ 

We shall use the $p$-adic logarithm function $\log_p$. For $z$ in $\Om_p$ with $|z|_p<p^{-1/(p-1)}$, this function is defined by
$$\log_p(1+z)=\sum_{k=1}^{+\infty}(-1)^{k-1}\frac{z^k}{k}.$$
It satisfies 
$$|\log_p(1+z)|_p=|z|_p,\eqno\hbox{(3.7)}$$ and, more precisely, 
$$|\log_p(1+z)-z|_p=\frac{|z|^2}{|2|_p}\cdot\eqno\hbox{(3.7')} $$
We use multiplicatively independent units $\om_1, \ldots ,\om_r$ in $E$ such that 
$$
|\tau_j(\om_l)-1|_p<p^{-1/(p-1)}, \quad 0\le j\le n,\,1\le l\le r.
$$
This is possible since, for $j=0, \ldots,\ n$, we have $|\tau_j(\om_l)|_p=1$, and, for every $z$ in $\Om_p$ with $|z|_p=1$, there is a positive integer $k$ such that $|z^k-1|_p<p^{-1/(p-1)}$ (clearly, an effective value of $k$, depending only upon the degree of $z$ over $\Q_p$, can be given); thus, it suffices to replace, if necessary, $\om_l$ by $\om_l^k$. 

Given an integer $s \ge 0$, we first construct a unit $\om$, with $0<|\om-1|_p<p^{-1/(p-1)}$, such that we have in $\C$ 
$$\left|\log\left|{\sig_j(\om)\over\sig_1(\om)}\right|\right|\le p^{-s}, \quad  2\le j\le n,\eqno\hbox{(3.8)}$$ and in $\Om_p$ 
$$\Bigl| \sum_{j=1}^n\tau_i(\beta_n)\log_p{\tau_j(\om)\over\tau_0(\om)}\Bigr|_p\le p^{-s}.\eqno\hbox{(3.9)}$$ 
 Note that $$\sum_{j=1}^n\tau_j(\beta_n)\log_p{\tau_j(\om)\over\tau_0(\om)}=
\sum_{j=0}^n\tau_j(\beta_n)(\log_p\tau_j(\om)-\log_p\tau_0(\om))=\sum_{j=0}^n\tau_j(\beta_n)\log_p\tau_j(\om),$$
since $\sum_{j=0}^n\tau_j(\beta_n)=0$. Thus (3.9) can also  be written
$$
\biggl|\sum_{j=0}^n\tau_j(\beta_n)\log_p\tau_j(\om) \biggr|_p\le p^{-s}.\eqno\hbox{(3.9')}$$
We search $\om$ of the form $\om=\om_1^{m_1}\cdots\om_r^{m_r}$, where the $m_l$ are integers, not all zero. We shall use the following lemma.

\begin{Lemma} There exist integers $m_1, \ldots ,m_r$, not all zero, such that 
$$\Bigl|\sum_{l=1}^rm_l\log\bigl|{\sig_j(\om_l)\over\sig_1(\om_1)}\bigr|\Bigr|\le p^{-s}, \quad  2\le j\le r,\eqno\hbox{\rm(3.10)}$$
$$\Bigl|\sum_{l=1}^rm_l\sum_{j=0}^n\tau_j(\beta_n)\log_p\tau_j(\om_l) \Bigr|_p\le p^{-s}, \eqno\hbox{\rm(3.11)}$$ and
$$\max_{1\le l\le r}|m_l|\ll p^{rs}.   \eqno\hbox{\rm(3.12)}$$  
\end{Lemma}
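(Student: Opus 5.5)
The plan is to apply the pigeonhole principle, as in the proof of Lemma~2.2, to the $r-1$ real linear forms in (3.10) together with the single $p$-adic linear form in (3.11). The point that makes the bound $p^{rs}$ in (3.12) possible is that the $p$-adic form takes values in $\Q_p$ itself, so it costs only one ``dimension'' of size $p^{s}$. (In (3.10) I read $\sig_1(\om_1)$ as $\sig_1(\om_l)$.)

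\emph{Step 1: the $p$-adic coefficients lie in $\Q_p$.} For $1\le l\le r$ set
$$\lambda_l=\sum_{j=0}^n\tau_j(\beta_n)\log_p\tau_j(\om_l).$$
I will argue that $\lambda_l\in\Q_p$. The embeddings $\tau_0,\ldots,\tau_n$ of $E$ into $\Om_p$ are permuted by $\mathrm{Gal}(\Om_p/\Q_p)$, which acts continuously. The series defining $\log_p$ has rational coefficients, so $\log_p$ commutes with this action. Hence every Galois automorphism permutes the terms of the sum defining $\lambda_l$. Equivalently, $\lambda_l$ is the trace from $E\otimes\Q_p$ to $\Q_p$ of $\beta_n\log_p(\om_l)$. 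Consequently there is an integer $a\ge0$, depending only on $\alp_1,\ldots,\alp_n$ and $p$, such that $\mu_l=p^{a}\lambda_l\in\Z_p$ for every $l$. Condition (3.11) then becomes the congruence
$$\sum_{l}m_l\mu_l\equiv0 \pmod{p^{s+a}\Z_p}.$$

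\emph{Step 2: pigeonhole.} Let $M$ be a positive integer and consider the $(M+1)^r$ tuples $(m_1,\ldots,m_r)$ with $0\le m_l\le M$. To each tuple attach two pieces of data.
\begin{itemize}
\item The real point $X=(X_j)_{2\le j\le r}$, where $X_j=\sum_l m_l\log|\sig_j(\om_l)/\sig_1(\om_l)|$. It lies in $[-C_1M,C_1M]^{r-1}$, and this cube is covered by at most $(2C_1Mp^{s}+1)^{r-1}$ boxes with sides of length $p^{-s}$.
\item The residue of $\sum_l m_l\mu_l$ modulo $p^{s+a}$. Since $\Z_p/p^{s+a}\Z_p\cong\Z/p^{s+a}\Z$, there are $p^{s+a}$ possible residues.
\end{itemize}
This gives at most $(3C_1Mp^s)^{r-1}p^{s+a}$ classes in total. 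Take $M=\lceil C_3p^{rs}\rceil$ with $C_3=(3C_1)^{r-1}p^{a}$. Then $(M+1)^r>M^r\ge(3C_1Mp^s)^{r-1}p^{s+a}$, so two distinct tuples fall into the same class. Their difference $(m_1,\ldots,m_r)$ is nonzero and satisfies $|m_l|\le M\ll p^{rs}$, which is (3.12). It satisfies (3.10) because both real points lie in the same box, and it satisfies (3.11) because $|\sum m_l\lambda_l|_p=p^{a}|\sum m_l\mu_l|_p\le p^{a}p^{-s-a}=p^{-s}$. Nonvanishing is automatic here, unlike in Lemma~2.2, because no auxiliary integers $k_j$ are involved.

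\emph{Main obstacle.} The only delicate point is Step~1, namely the rationality $\lambda_l\in\Q_p$. Without it, the values $\sum m_l\lambda_l$ would lie in a $\Z_p$-module of rank up to $n+1$ inside a finite extension of $\Q_p$. The pigeonhole count would then cost up to $p^{(n+1)s}$ instead of $p^{s}$, and (3.12) would fail. I would verify Step~1 carefully: first check that each $\tau_j(\om_l)$ lies in the disc where $\log_p$ converges, which is exactly how the $\om_l$ were chosen; then check that a Galois automorphism $g$ satisfies $g(\log_p\tau_j(\om_l))=\log_p(g\tau_j(\om_l))$.
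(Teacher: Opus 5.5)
Your proposal is correct and is essentially the paper's proof. The paper also first shows that $\sum_{j}\tau_j(\beta_n)\log_p\tau_j(\om_l)\in\Q_p$, though by expanding $\log_p$ termwise and noting each term $\tr(\beta_n(\om_l-1)^k)/k$ is rational rather than by your (equally valid) Galois-invariance argument, and then runs the same pigeonhole count on $[-C_3M,C_3M]^{r-1}\times p^{-b}\Z_p$ with $M\asymp p^{rs}$; your reading of $\sig_1(\om_1)$ as $\sig_1(\om_l)$ is the intended one.
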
 

\begin{proof}
First, note that for every $x$ in $ E$ such that $$|\tau_j(x)-1|_p<p^{-1/(p-1)}, \quad  0\le j\le n,\eqno\hbox{(3.13)}$$
the sum $\sum_{j=0}^n\tau_j(\beta_n)\log_p \tau_j(x)$ lies in $\Q_p$. Indeed, by (3.13), we have 
$$\log_p\tau_j(x)= \sum_{k=1}^{+\infty}(-1)^{k-1}{(\tau_j(x)-1)^k\over k}, \quad j=0,\ldots,n,$$ 
and, as we suppose that $x$ belongs to $E$,
$$\sum_{j=0}^n\tau_j(\beta_n)\log \tau_j(x)=\sum_{k=1}^{+\infty}(-1)^ 
{k-1}\sum_{j=0}^n\tau_j(\beta_n){\tau_j((x-1)^k)\over k}=\sum_{k=1}^{+\infty}(-1)^ 
{k-1}{\tr(\beta_n(x-1)^k)\over k}\cdot$$ 
This series is convergent in $\Om_p$, while all its terms are rational numbers. Hence its sum lies in $\Q_p$.

Then, for an $r$-tuple $(m_1,\ldots,m_r)$ in $\Z^r$, consider the point  $X=(X_j)_{2\le j\le r}$ in $\R^{r-1}$, where
$$X_j=\sum_{l=1}^rm_l\log\left|{\sig_j(\om_l)\over\sig_1(\om_1)}\right|, \quad  2\le j\le r,$$ and the point $Y$ in $\Q_p$
defined by 
$$Y=\sum_{l=1}^rm_l\sum_{j=0}^n\tau_j(\beta_n)\log_p\tau_j(\om_l).$$ 
Given a positive integer $M$, if we take integers $m_1, \ldots , m_r$ in $[0, M]$,  
then we get $(M+1)^r$ points $(m_1,\ldots,m_r)$ in $\Z^r$. Let $C_3$ be a positive integer such that $$\sum_{l=1}^r\left|\log\left|{\sig_j(\om_l)\over\sig_1(\om_1)}\right|\right|\le C_3, \quad  2\le j\le r,$$
and $b$ a nonnegative integer such that $$|\tau_j(\beta_n)|_p \, p^{-1/(p-1)}\le p^b, \quad  0\le j\le n.$$  
For any of the points $(X,Y)$, we have
$$|X_j|\le C_3M, \quad  |Y|_p\le p^b.$$
In other words, the pair $(X,Y)$ belongs to the set $[-C_3M,C_3M]^{r-1}\times p^{-b}Z_p.$ Now this set can be covered by $(2C_3)^{r-1}M^{r-1}p^{rs+b}$ sets of the form $(\prod_{2\le j\le r}[a_j,a_j+p^{-s}])\times 
 (y+p^s\Z_p)$.
By the pigeon-hole principle, if we set 
$$M=(2C_3)^{r-1}p^{rs+b},\eqno\hbox{(3.14)}$$
then, we have 
$$(2C_3)^{r-1}M^{r-1}p^{rs+b}=M^r<(M+1)^r, $$
and we get by difference a point $(m_1,  \ldots ,m_r)$ in $\Z^r\bsl\{0,\ldots,0\}$ 
satisfying (3.10) and (3.11) with $$\max_{1\le l\le r}|m_l|\le M.$$ Thus, in view of (3.14), Lemma 3.1 is proved.
\end{proof} 

We are now in position to complete the proof of Theorem 1.2.  
Let $(m_1,\ldots,m_r)$ be a nonzero $r$-tuple in $\Z^r$ satisfying the three inequalities in Lemma 3.1. Put
$$\om=\om_1^{m_1}\cdots\om_r^{m_r}.$$ 
Conditions (3.10) and (3.11) mean that the number $\om$ satisfies (3.8) and (3.9), and, as
before, $\om_1, \ldots ,\om_r$ being multiplicatively independent, we have $$\max_{1\le j\le n}|\log|\sig_j(\om)||\asymp\max_{1\le l\le r}|m_l|.$$ Hence
$$\max_{1\le j\le n}|\log|\sig_j(\om)||\gg1.\eqno\hbox{(3.15)}$$
Now, by (3.8), we have $$|\log|\sig_j(\om)|-\log|\sig_1(\om)||\le p^{-s}, \quad j=2, \ldots , n, \eqno\hbox{(3.8')}$$ thus, as we can suppose without loss of generality that $s$ is large, we deduce from (3.15) and (3.8') that  $$|\log|\sig_1(\om)||\gg1.\eqno\hbox{(3.16)}$$
Since $\omega$ is a unit, we have
$$\log|\om|+\sum_{1\le j\le n}\log|\sig_j(\om)|=0,$$
thus, by (3.8'),  $$|\log|\om|+n\log|\sig_1(\om)||\le(n-1)p^{-s},\eqno \hbox{(3.17)}$$
 and 
$$|\log|\om|+n\log|\sig_j(\om)||\le(2n-1)p^{-s}, \quad j=1,\,\ldots,\,n. \eqno \hbox{(3.17')}$$ Thus, (3.16) and (3.17) imply that, if $s$ is large, then we have
$$|\log|\om||\gg1.$$ Hence $|\om|\ne1$, and, since we may replace, if necessary, the $r$-tuple $(m_1, \ldots , m_r)$ 
by its opposite, we can suppose that $|\om|>1$. We have then $$\log|\om|\gg1.$$ 
It follows from (3.17') that $$|\sig_j(\om^{p^s})|\asymp|\om^{-p^s/n}|, \quad 1\le j\le n,\eqno\hbox{(3.18)}$$ while 
$$\log |\om|^{p^s} \gg p^s,\eqno\hbox{(3.19)}$$
 that is to say that $|\om^{p^s}|$ is large when $s$ is large.
Hence, as in Section 2, putting
$$Q_{i,s}=D\tr(\alp_i\beta_n\om^{p^s}), \quad i=0, \ldots , n,$$ we get by (3.18) and (3.19) that 
$$|Q_{0,s}|\asymp|\om|^{p^s},\eqno\hbox{(3.20)}$$ and, by (3.18) and (3.4) with $u=\om^{p^s}$,
$$|Q_{0,s}|^{1/n}|Q_{0,s}\alp_i-Q_{i,s}|\ll1,  \quad 1\le i\le n.$$
By (3.6), with $u=\om^{p^s}$, we have
$$|Q_{i,s}|_p\le\biggl|\sum_{1\le j\le n}\tau_j(D\alp_i\beta_n)\Bigl({\tau_j(\om^{p^s})\over\tau_0(\om^{p^s})}-1\Bigr)\biggr|_p, \quad  0\le i\le n-1.\eqno\hbox{(3.6')}$$
Now, by (3.7), $$\left|{\tau_j(\om^{p^s})\over\tau_0(\om^{p^s})}-1\right|_p=\left|\log_p{\tau_j(\om^{p^s})\over\tau_0(\om^{p^s})}\right|_p=p^{-s}\left|\log_p{\tau_j(\om)\over\tau_0(\om)}\right|_p<p^{-s-1/(p-1)}, \quad j = 1, \ldots , n.$$ 
Hence, as $D\alp_i\beta_n$ is an algebraic integer for $i=1, \ldots , n-1$, we get 
$$|Q_{i,s}|_p\le p^{-s}, \quad 1\le i\le n-1.\eqno\hbox{(3.5')}$$
This inequality holds for $i=0$, but we can give a more precise result in this case. Indeed, by (3.7'),
$$\left|{\tau_j(\om^{p^s})\over\tau_0(\om^{p^s})}-1-p^s
\log_p{\tau_j(\om)\over\tau_0(\om)}\right|_p\le \frac1{|2|_p}
\left|{\tau_j(\om^{p^s})\over\tau_0(\om^{p^s})}-1\right|_p^2\le \frac1{|2|_p} p^{-2s}p^{-2/(p-1)} \le p^{-2s}, 
\quad j=1, \ldots , n, 
$$ and as, by (3.9), 
$$\left|p^s\sum_{1\le j\le n}\tau_j(\beta_n)\log_p{\tau_j(\om)\over\tau_0(\om)}\right|_p\le p^{-2s},$$
we get by (3.6') 
$$|Q_{0,s}|_p\le p^{-2s}.\eqno\hbox{(3.5'')}$$ Lastly, as $\log |\om| \ll\max_{1\le l\le r}|m_l|$, it follows from (3.12) and (3.20) that 
$$\log|Q_{0,s}|\ll p^{(r+1)s}.\eqno\hbox{(3.5'")}$$
Then, by (3.5'), (3.5''), and (3.5'"), condition (3.5) is proved. 
Moreover, we have obtained infinitely many different   
integers $Q_{0,s}$ since $$\log|Q_{0,s}|\gg p^s.$$ 
The proof of Theorem 1.2 is complete.


\section{Proof of Theorem 1.3} 

We keep the notation of Section 3. 
We use multiplicatively independent units $\om_1, \ldots ,\om_r$ in $E$ satisfying the condition:
$$|\tau_j(\om_l)-1|_p<p^{-1/(p-1)}, \quad 1\le l\le r, \quad 0\le j\le n.$$
 First, we need the following lemma.
 
 \begin{Lemma} There exists a sequence of units $(\eta_m)_{m \ge 1}$ of the form
 $$\eta_m=\om_1^{\mu_{1,m}} \cdots \om_r^{\mu_{r,m}},\eqno\hbox{\rm(4.1)}$$ where $\mu_{l,m}$ is in $\Z$
 for $1 \le l \le r$ and $m \ge 1$, 
 such that, for $m \ge 1$, we have 
$$ |\eta_m|\asymp  \rme^m,\eqno\hbox{\rm(4.2)}$$
$$|\sig_j(\eta_m)|\asymp  \rme^{-m/n}, \quad 1\le j\le n,\eqno\hbox{\rm(4.3)}$$
$$\max_{1\le l\le r}|\mu_{l,m}|\asymp m.\eqno\hbox{\rm(4.4)}$$ 

\end{Lemma}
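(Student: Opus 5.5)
We view this as a lattice-point problem in logarithmic space. Take the logarithmic embedding
$$L:\ x\mapsto \bigl(\log|\sig_0(x)|,\log|\sig_1(x)|,\ldots,\log|\sig_{r_1+r_2-1}(x)|\bigr)\in\R^{r+1},$$
restricted to units. By Dirichlet's unit theorem, the vectors $L(\om_1),\ldots,L(\om_r)$ are linearly independent. They lie in the hyperplane $H=\{\sum_k d_k y_k=0\}$, where $d_k=1$ for real places and $d_k=2$ for complex ones. So $\Lambda=\bigoplus_l\Z L(\om_l)$ is a full-rank lattice in the $r$-dimensional space $H$. Let $F$ be a fundamental domain; it is bounded, with diameter $\le C_4$ for a constant $C_4$ depending only on the $\om_l$.

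Conditions (4.2) and (4.3) say that $L(\eta_m)$ lies within bounded distance of the point
$$P_m=m\Bigl(1,-\tfrac1n,\ldots,-\tfrac1n\Bigr).$$
This point lies in $H$, because
$$1-\frac1n\Bigl((r_1-1)+2r_2\Bigr)=1-\frac{n}{n}=0.$$
Note that for a complex place $\sig_j$ we have $|\sig_{j+r_2}(x)|=|\sig_j(x)|$, so one coordinate controls both conjugates.

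The construction is then to write $P_m=\sum_l t_{l,m}L(\om_l)$ with $t_{l,m}\in\R$ and set $\mu_{l,m}=\lfloor t_{l,m}\rfloor$, with $\eta_m$ given by (4.1). Then
$$\|L(\eta_m)-P_m\|\le \sum_l\|L(\om_l)\|\le C_4.$$
Exponentiating coordinatewise gives
$$\log|\eta_m|=m+O(1),\qquad \log|\sig_j(\eta_m)|=-\frac mn+O(1)\quad(1\le j\le n),$$
which is exactly (4.2) and (4.3).

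For (4.4), the $t_{l,m}$ are linear in $m$: $t_{l,m}=m\,t_l$, where $(t_l)$ are the coordinates of $(1,-1/n,\ldots,-1/n)$ in the basis $L(\om_l)$. That vector is nonzero, so $\max_l|t_l|>0$. Hence
$$\max_l|\mu_{l,m}|=m\max_l|t_l|+O(1)\asymp m,$$
which gives the upper bound for all $m$ and the lower bound for $m$ large. For the finitely many small $m$, the lower bound $\max_l|\mu_{l,m}|\gg m$ could fail only if all $\mu_{l,m}=0$. In that case $\eta_m=1$, which is still compatible with (4.2) and (4.3) since the implied constants may absorb bounded $m$. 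To avoid the degenerate case entirely, replace $\mu_{l,m}$ by $\lceil t_{l,m}\rceil$ for one index $l$ with $t_l\neq0$. Alternatively, use the equivalence already used in Sections 2 and 3: since the matrix $(\log|\sig_j(\om_l)|)$ is invertible,
$$\max_l|\mu_{l,m}|\asymp\max_j|\log|\sig_j(\eta_m)||\asymp m\quad\text{for } m\ge1,$$
once $\eta_m\ne1$.

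There is no real obstacle. The only points needing care are checking that the target direction lies in $H$ (the degree count $r_1+2r_2=n+1$) and handling the bounded-$m$ range so that the $\asymp$ constants are uniform. The conditions $|\tau_j(\om_l)-1|_p<p^{-1/(p-1)}$ play no role here; they only ensure that the $\om_l$ used are the ones fixed for the subsequent $p$-adic argument.
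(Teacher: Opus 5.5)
Your argument is correct: rounding the coordinates $m\,t_l$ of $m(1,-\tfrac1n,\ldots,-\tfrac1n)$ in the basis $L(\om_1),\ldots,L(\om_r)$ of the logarithmic unit lattice is the standard Dirichlet-unit-theorem construction behind this lemma. The paper does not reprove the lemma; it only quotes it from Peck and from de Mathan's earlier paper. Your fix for small $m$ also works: the degenerate case $\mu_{1,m}=\cdots=\mu_{r,m}=0$ can only occur when all $t_l\ge0$, so the index you round up has $t_l>0$ and $\lceil m t_l\rceil\ge1$.
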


\begin{proof}
See  \cite{Pe} or \cite{dual4}. 
\end{proof}

\begin{Lemma} Let $C$ be a positive real number. Denote by ${\mathcal Q_C}$ the set of $(n+1)$-tuples $(q_0,\ldots,q_n)$ in $\Z^{n+1}$, with $q_0\ne0$, for which $$|q_0|^{1/n}\vert q_0\alp_i-q_i\vert\le C, \quad  1\le i\le n.\eqno\hbox{\rm(4.5)}$$ 
There exists a finite set $\Gam$ in $E$ such that, for every $(q_0,\ldots,q_n)$ in $ {\mathcal Q_C}$ with $|q_0|$ sufficiently large (in terms of $\alp_1, \ldots , \alp_n$, and $C$), there exist a positive integer $m$ and a real number $\gam$ in $\Gam$ such that  
$$q_i=\tr(\gam\eta_m\alp_i), \quad  0\le i\le n. $$   
\end{Lemma}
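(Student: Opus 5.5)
The plan is to attach to each tuple $(q_0,\ldots,q_n)\in{\mathcal Q}_C$ an element $x$ of $E$ with $\tr(x\alp_i)=q_i$, and then to show that $x$ is, up to a bounded factor, one of the units $\eta_m$ of Lemma 4.1.

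\textbf{Step 1: the element $x$.} Since $(\beta_i)$ is the dual basis of $(\alp_i)$, we put $x=\sum_{0\le k\le n}q_k\beta_k$. Then $x\in E$ and, by (2.3), $\tr(x\alp_i)=q_i$ for $0\le i\le n$. Moreover $x$ lies in the lattice $\Lambda=\sum_k\Z\beta_k$, so $D'x$ is an algebraic integer for a fixed integer $D'$.

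\textbf{Step 2: the conjugates of $x$.} We rewrite $x=q_0\sum_k\alp_k\beta_k-\sum_{k\ge1}(q_0\alp_k-q_k)\beta_k$ (with $\alp_0=1$). For $j\ge1$ we have $\sum_k\alp_k\sig_j(\beta_k)=0$, because $\sum_k\alp_k\beta_k$ is the trace-dual element: the matrices $(\sig_j(\alp_k))$ and $(\sig_j(\beta_k))$ are inverse-transposes, so $\sum_k\sig_i(\alp_k)\sig_j(\beta_k)=\delta_{ij}$. Taking $i=0$ gives $\sum_k\alp_k\sig_j(\beta_k)=\delta_{0j}$. Hence
$$\sig_j(x)=-\sum_{k\ge1}(q_0\alp_k-q_k)\sig_j(\beta_k),\quad\text{so}\quad |\sig_j(x)|\ll C|q_0|^{-1/n},\quad 1\le j\le n,$$
while $x=q_0-\sum_{k\ge1}(q_0\alp_k-q_k)\beta_k$ gives $|x|\asymp|q_0|$ once $|q_0|$ is large. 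Therefore $|N_{E/\Q}(D'x)|\ll_C 1$. Since $x\neq0$ and $D'x$ is an algebraic integer, there are only finitely many possible norms. Up to multiplication by units, there are only finitely many algebraic integers of bounded norm, so $D'x=\theta\varepsilon$ with $\theta$ in a fixed finite set $\Theta_C$ and $\varepsilon$ a unit.

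\textbf{Step 3: reduction to the $\eta_m$.} This is the main obstacle. We must write $\varepsilon=\zeta\,\eta_m\,v$, where $v$ ranges over a finite set of units, $\zeta=\pm1$, and $m\ge1$. We use the logarithmic embedding $L(y)=(\log|\sig_j(y)|)_{0\le j\le r}$ in the hyperplane $\sum=0$ (complex places counted twice). By Step 2, $L(x)$ lies within $O_C(1)$ of the ray $t\cdot(1,-1/n,\ldots,-1/n)$ (with weights) at $t=\log|q_0|$. Since $L(\theta)$ is bounded, $L(\varepsilon)$ lies within $O_C(1)$ of that ray. By (4.2)--(4.3), $L(\eta_m)$ lies within $O(1)$ of the same ray at $t=m$. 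Choosing $m=\lfloor\log|q_0|\rfloor$ (which is $\ge1$ for large $|q_0|$) gives $\|L(\varepsilon\eta_m^{-1})\|\ll_C1$. The units with bounded logarithmic embedding form a finite set (Dirichlet/Kronecker), which yields $v$, up to a root of unity $\pm1$, since $E$ is real.

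One technical point remains: the group generated by $\om_1,\ldots,\om_r$ has finite index in the full unit group. So $\varepsilon\eta_m^{-1}$ lies in a full unit group, which contains finitely many elements of bounded height; this is fine, and no restriction to the subgroup is needed.

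\textbf{Step 4: conclusion.} Set $\Gam=\{\pm\theta v/D'\}$. This set is finite and depends only on $C$ and the $\alp_i$. Every element of $\Gam$ is real, since $x$, $\eta_m$ and hence $\theta v$ are real. We get $x=\gam\eta_m$, and therefore $q_i=\tr(\gam\eta_m\alp_i)$ for $0\le i\le n$, which is the claim of Lemma 4.2.
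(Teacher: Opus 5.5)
Your proof is correct. The element you end up with, $\pm\theta v/D'=x/\eta_m$ with $m\approx\log|q_0|$, is exactly the paper's $\gam=\xi/\eta_m$. The two arguments differ only in how they show that the possible $\gam$ form a finite set.

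The paper does this in one step. From (4.8'), (4.9) and (4.2)--(4.3) it gets $|\gam|\ll1$ and $|\sig_j(\gam)|\ll C$ for $1\le j\le n$. Since $D\gam$ is an algebraic integer ($\eta_m$ being a unit), $\gam$ lies in a finite set. Only upper bounds on the conjugates of $\xi$ are needed.

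You split the finiteness into two classical facts: there are finitely many principal ideals of bounded norm (giving $\theta$), and finitely many units with bounded logarithmic embedding (giving $v$). This uses more machinery. It also needs two-sided control of $\log|\sig_j(x)|$, whereas Step 2 only states upper bounds. The missing lower bounds $|\sig_j(x)|\gg_C|q_0|^{-1/n}$ follow in one line, either from $|N_{E/\Q}(D'x)|\ge1$ or from the fact that $L(\varepsilon)$ lies in the trace-zero hyperplane, but you should say so.

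A small point of reading: your formula $\sig_j(x)=-\sum_{k\ge1}(q_0\alp_k-q_k)\sig_j(\beta_k)$ is right. It should be read as $\sig_j(x)=\sum_k q_k\sig_j(\beta_k)$ with each integer $q_k$ split in $\R$ as $q_0\alp_k-(q_0\alp_k-q_k)$. It should not be read as $\sig_j$ applied to an identity in $E$, since that would conjugate the $\alp_k$ as well.
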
 

This lemma is proved in \cite{dual4}. For convenience, we reproduce below its proof. 

\begin{proof}
Recall that $(\beta_ i )_{0\le i\le n}$ denotes the dual basis of the basis 
$(\alp_ i )_{0\le  i \le n}$ of $E$ and that $D$ is a positive integer such that $D\alp_ i \beta_j$ 
is an algebraic integer in $E$ for every $i, j$ with $0\le  i , j \le n$. 
First, for an $(n+1)$-tuple $(q_0,\ldots,q_n)$ as in the lemma, setting
 $$\xi=\sum_{j=0}^nq_j\beta_j, $$ 
we have
$$\tr(\xi\alp_i)=q_i, \quad  0\le i\le n, \eqno\hbox{(4.6)}$$ 
and $D\xi$ is an algebraic integer.  
Let $m$ be the positive integer such that $$ \rme^{m-1}\le|q_0|< \rme^m.\eqno\hbox{(4.7a)}$$
 Put
$$\gam={\xi\over \eta_m}\cdot\eqno\hbox{(4.7b)}$$ 
Then, $D\gam$ is an algebraic integer. By (4.6), we have
$$q_0\alp_i-q_i=\sum_{1\le j\le n}(\alp_i-\sig_j(\alp_i))\sig_j(\xi), \quad  0\le i\le n. \eqno\hbox{(4.6')}$$  
Now, the $(n\times n)$-matrix
$(\alp_i-\sig_j(\alp_i))_{1\le i, j \le n}$ is invertible, since
$$\begin{vmatrix} 
\sig_1(\alp_1)-\alp_1 &\ldots &\sig_1(\alp_n)-\alp_n  \\
\ldots &\ldots &\ldots    \\ \sig_n(\alp_1)-\alp_1 &\ldots &\sig_n(\alp_n)-\alp_n \end{vmatrix} 
=\begin{vmatrix}
1&\alp_1 &\ldots &\alp_n  \\1&\sig_1(\alp_1) &\ldots &\sig_1(\alp_n)  \\
\ldots &\ldots &\ldots  \\1 &\sig_n(\alp_1)&\ldots &\sig_n(\alp_n) \end{vmatrix} \not= 0.$$
Thus, by (4.6'), we have 
$$\max_{1\le i\le n}|q_0\alp_i-q_i|\asymp\max_{1\le j\le n}|\sig_j(\xi)|,\eqno\hbox{(4.8)}$$
hence $$|\sig_j(\xi)|\ll C|q_0|^{-1/n}, \quad 1\le j\le n.\eqno\hbox{(4.8')}$$ 
Recalling that 
$$q_0= \tr(\xi \alpha_0) = \tr(\xi) = \sum_{0\le i\le n}\sig_i(\xi),$$ 
we deduce from (4.8') 
that $$\frac{|q_0|}{2}\le|\xi|\le{3|q_0|\over2}, \eqno\hbox{(4.9)}$$
for $|q_0|$ large enough in terms of $\alp_1,\ldots,\alp_n$ and $C$. 
 Accordingly, by (4.2), (4.3), (4.7a), (4.7b), and (4.9), we have 
$$|\gam|\ll 1,\quad  |\sig_j(\gam)|\ll  C, \quad 1\le j\le n.\eqno\hbox{(4.10)}$$
Now, there are only finitely many  
real numbers $\gam$ in $ E$ satisfying (4.10) and such that $D\gam$ is an algebraic integer. 
Thus Lemma 4.2 is proved.
\end{proof}

In order to complete the proof of Theorem 1.3, it is enough, by Lemma 4.2, to prove that if $\gam$ is a real number in $E$, 
and if we consider the  integers $q_i=\tr(\gam\eta_m\alp_i)$, for $i=0, \ldots, n$, with 
$$|q_0\alp_i-q_i|\le C|q_0|^{-1/n}, \quad  1\le i\le n,$$ then there exists a positive constant $\rho=\rho(\gam)$, such that we have, when $|q_0|$ is large (in terms of $\alp_1, 
\ldots , \alp_n$, and $\gam$), 
$$
\max_{0\le i\le n-1}|q_i|_p\gg_{\gam}(\log|q_0|)^{-\rho}.
$$
Here and below, the subscript ${}_\gamma$ means that the implicit constant also depends on $\gamma$. 

\begin{Lemma} Let $\xi$ be a real number in $E$. Set
$$q_i=\tr(\xi\alp_i), \quad  0\le i\le n.$$ Then
$$\max_{0\le i\le n-1}|\tr(\xi\alp_i)|_p\asymp
\max_{1\le j\le n}\left|\tau_j(\xi)-{\tau_j(\beta_n)\over\tau_0(\beta_n)}\tau_0(\xi)\right|_p.
$$
\end{Lemma}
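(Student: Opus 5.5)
The lemma is a piece of linear algebra over $\Om_p$: it compares two finite families of $p$-adic linear forms in the vector $(\tau_0(\xi),\ldots,\tau_n(\xi))$. The implicit constants depend only on $\alp_1,\ldots,\alp_n$ and $p$. I will show that the vector $(\tr(\xi\alp_i))_{0\le i\le n-1}$ and the vector
$$z_j=\tau_j(\xi)-{\tau_j(\beta_n)\over\tau_0(\beta_n)}\tau_0(\xi),\quad 1\le j\le n,$$
are images of one another under fixed linear maps with coefficients in $\Om_p$. Fixed linear maps can distort $p$-adic max-norms only by bounded factors, which gives $\asymp$.

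The first direction expresses the traces in terms of the $z_j$. Since $(\beta_i)$ is the dual basis of $(\alp_i)$, we have $\xi=\sum_{i=0}^n\tr(\xi\alp_i)\beta_i$. Apply $\tau_j$ to this identity, and from it subtract ${\tau_j(\beta_n)\over\tau_0(\beta_n)}$ times the same identity with $\tau_0$. The coefficient of $q_n=\tr(\xi\alp_n)$ cancels, and we get
$$z_j=\sum_{i=0}^{n-1}q_i\Bigl(\tau_j(\beta_i)-{\tau_j(\beta_n)\over\tau_0(\beta_n)}\tau_0(\beta_i)\Bigr),\quad 1\le j\le n.$$
The coefficients are fixed elements of $\Om_p$ (note $\tau_0(\beta_n)\ne0$), so the ultrametric inequality gives $\max_j|z_j|_p\ll\max_{0\le i\le n-1}|q_i|_p$.

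The second direction expresses the $q_i$ in terms of the $z_j$, and this is where the structure matters. Write $\tau_j(\xi)=z_j+{\tau_j(\beta_n)\over\tau_0(\beta_n)}\tau_0(\xi)$ for $j\ge1$ and substitute into $q_i=\sum_{j=0}^n\tau_j(\alp_i)\tau_j(\xi)$. The coefficient of $\tau_0(\xi)$ is
$$\tau_0(\alp_i)+\sum_{j\ge1}\tau_j(\alp_i){\tau_j(\beta_n)\over\tau_0(\beta_n)}={\tr(\alp_i\beta_n)\over\tau_0(\beta_n)}=0 \quad\hbox{for } 0\le i\le n-1,$$
by (2.3). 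Hence $q_i=\sum_{j=1}^n\tau_j(\alp_i)z_j$, which is exactly the $p$-adic analogue of (3.5). The coefficients $\tau_j(\alp_i)$ are fixed, so $\max_i|q_i|_p\ll\max_j|z_j|_p$.

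Combining the two bounds gives the lemma. There is no real obstacle here; the key point is the vanishing of $\tr(\alp_i\beta_n)$ for $i\le n-1$, which removes $\tau_0(\xi)$ from the second expression. In particular, no invertibility argument of the kind used in Lemma 2.1 is needed, because each direction is an explicit formula. The only care required is to keep $\tau_0(\beta_n)\ne0$ in view, which holds since $\beta_n\ne0$.
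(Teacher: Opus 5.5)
Your proof is correct, and the half giving $\max_i|q_i|_p\ll\max_j|z_j|_p$ is the paper's own computation: you both use $\tr(\alp_i\beta_n)=0$ for $i\le n-1$ to get $q_i=\sum_{j=1}^n\tau_j(\alp_i)z_j$. The difference is in the reverse inequality.

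The paper obtains it by asserting that the $n\times n$ matrix $(\tau_j(\alp_i))_{0\le i\le n-1,\,1\le j\le n}$ is invertible. It reduces the determinant by column operations to $\det(\tau_j(\alp_i)-\tau_1(\alp_i))_{1\le i\le n-1,\,2\le j\le n}$ and appeals to Lemma 2.1. You instead write each $z_j$ explicitly as a combination of $q_0,\ldots,q_{n-1}$ through $\xi=\sum_i q_i\beta_i$.

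The underlying linear algebra is the same. Applying both of your formulas to $\xi=\sum_{i<n}q_i\beta_i$ with $q\in\Q^n$ shows that your coefficient matrix $\bigl(\tau_j(\beta_i)-\frac{\tau_j(\beta_n)}{\tau_0(\beta_n)}\tau_0(\beta_i)\bigr)$ is the inverse of $(\tau_j(\alp_i))$. Its entries have the same form as those of the matrix $\mathcal B$ in the proof of Lemma 2.1.

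What your version buys is self-containment. Lemma 2.1 is stated for the complex embeddings, with $\sig_0$ subtracted and $\sig_1$ omitted. The paper tacitly transfers it to the $p$-adic embeddings, with $\tau_1$ subtracted and $\tau_0$ omitted. This is harmless, since the trace computation in that proof works verbatim for any two distinct embeddings into any algebraically closed field. Your dual-basis formula needs no such transfer, and it yields the invertibility as a by-product rather than requiring it as an input.
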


\begin{proof} 
Let us write $$\tr(\xi\alp_i)=\sum_{0\le j\le n}\tau_j(\xi)\tau_j(\alp_i),$$ while
$$\tr(\alp_i\beta_n) = 0=\sum_{0\le j\le n}\tau_j(\beta_n)\tau_j(\alp_i), \quad i=0, \ldots, n-1.$$ 
Thus we get
$$\tr(\xi\alp_i)=\sum_{1\le j\le n}\Bigl(\tau_j(\xi)-{\tau_j(\beta_n)\over\tau_0(\beta_n)}\tau_0(\xi)\Bigr)\tau_j(\alp_i), \quad  0\le i\le n-1.$$
Now the matrix $(\tau_j(\alp_i))_{0\le i \le n-1, 1 \le j\le n}$ is invertible since 
$$
\begin{vmatrix}
1&\ldots &1    \\ \tau_1(\alp_1) &\ldots &\tau_n(\alp_1)    \\ \ldots &\ldots &\ldots    \\ \tau_1(\alp_{n-1}) &\ldots &\tau_{n}(\alp_{n-1}) \end{vmatrix}
=\begin{vmatrix} \tau_2(\alp_1)-\tau_1(\alp_1) &\ldots &\tau_{n}(\alp_1)-\tau_1(\alp_1)    \\ \ldots &\ldots &\ldots 
   \\ \tau_2(\alp_{n-1})-\tau_1(\alp_{n-1})&\ldots &\tau_{n}(\alp_{n-1})-\tau_1(\alp_{n-1}) \end{vmatrix} \not= 0,$$  
by Lemma 2.1. Thus, Lemma 4.3 is proved.
\end{proof}

Then we have to prove Theorem 1.3 for $(n+1)$-tuples $(q_0,\ldots,q_n)$,
 with $|q_0|\ge2$,  
 of the form 
$q_i=\tr(\gam\eta_m\alp_i)$, $0 \le i \le n$, 
where $\gam$ is a fixed nonzero real number in $E$.
As $|\tau_0(\eta_m)|_p 
=1$, we have by Lemma 4.3, $$\max_{0\le i\le n-1}|q_i|_p\gg_{\gam}\max_{1\le j\le n}\left|{\tau_j(\eta_m)\over\tau_0(\eta_m)}-{\tau_j(\beta_n)\tau_0(\gam)\over\tau_0(\beta_n)\tau_j(\gam)}\right|_p.$$ As $(q_0,\ldots,q_{n-1})\ne(0,\ldots,0)$, it follows from Lemma 4.3 that there is at least one integer $j$ with $1\le j\le n$ such that 
$${\tau_j(\eta_m)\over\tau_0(\eta_m)}-{\tau_j(\beta_n)\tau_0(\gam)\over\tau_0(\beta_n)\tau_j(\gam)}\ne0.$$
We have $|\tau_j(\eta_m)/\tau_0(\eta_m)-1|_p<p^{-1/(p-1)}$, thus, if $|\tau_j(\beta_n)\tau_0(\gam)/(\tau_0(\beta_n)\tau_j(\gam))-1 
|_p\ge p^{-1/(p-1)}$, then we get  $$\left|{\tau_j(\eta_m)\over\tau_0(\eta_m)}-{\tau_j(\beta_n)\tau_0(\gam)\over\tau_0(\beta_n)\tau_j(\gam)}\right|_p\ge p^{-1/(p-1)}.$$ If $|\tau_j(\beta_n)\tau_0(\gam)/(\tau_0(\beta_n)\tau_j(\gam))-1 
|_p<p^{-1/(p-1)}$, the function $\log_p$ being isometrical,  
 we can write
$$
\left|{\tau_j(\eta_m)\over\tau_0(\eta_m)}-{\tau_j(\beta_n)\tau_0(\gam)\over\tau_0(\beta_n)\tau_j(\gam)}\right|_p=\left|\log_p{\tau_j(\eta_m)\over\tau_0(\eta_m)}-\log_p{\tau_j(\beta_n)\tau_0(\gam)\over\tau_0(\beta_n)\tau_j(\gam)}\right|_p,$$i.e.,
$$\left|{\tau_j(\eta_m)\over\tau_0(\eta_m)}-{\tau_j(\beta_n)\tau_0(\gam)\over\tau_0(\beta_n)\tau_j(\gam)}\right|_p=\left|\mu_{1,m}\log_p{\tau_j(\om_1)\over\tau_0(\om_1)}+\ldots+\mu_{r,m}\log_p{\tau_j(\om_r)\over\tau_0(\om_r)}-\log_p{\tau_j(\beta_n)\tau_0(\gam)\over\tau_0(\beta_n)\tau_j(\gam)}\right|_p. 
$$
By Kunrui Yu's $p$-adic version of Baker's Theorem (see \cite{BpY,Bu18}), 
we then get 
$$\left|{\tau_j(\eta_m)\over\tau_0(\eta_m)}-{\tau_j(\beta_n)\tau_0(\gam)\over\tau_0(\beta_n)\tau_j(\gam)}\right|_p
\gg_\gamma (\max_{1\le l\le r}|\mu_{l,m}|)^{-\rho},$$  
where $\rho$ is a positive constant (the constant involved in the symbol 
$\gg_\gamma$, as well as $\rho$, depends only on $\alp_1, \ldots ,\alp_n$, and $\gam$). As, by (4.4),  
$$
\max_{1\le l\le r}|\mu_{l,m}|\ll_\gam\log|q_0|, \quad m \ge 1, 
$$ 
we get
$$
\max_{0\le i\le n-1} |q_i|_p\gg(\log|q_0|)^{-\rho},
$$ 
where finally the involved constant and $\rho$ depend only on $\alp_1, \ldots ,\alp_n$, and $C$  
(indeed, when $C$ is given, we have to consider only a finite set of numbers $\gam$).

\section{Additional remarks}

For a basis $(1,\alp_1,\ldots,\alp_n)$ of a real number field $E$, 
it would be interesting to obtain other examples of integers $Q$ and positive numbers $\eps$, such that
$$|Q| \cdot \Vert Q\alp_1\Vert  \cdots \Vert Q\alp_n\Vert\le\eps.$$ 
If $n>2$, we are not able to prove that there exist integers $Q \ge 2$ such that
$$Q^{1/n}\Vert Q\alp_i\Vert\ll1, \quad 1\le i\le n-1,$$
and $$Q^{1/n}\Vert Q\alp_n\Vert\log Q\ll1.$$
Also, when $E$ is a cubic field, we do not know whether there exist pairs $(\lambda,\mu)$ of real numbers
other than $(1/2,1/2)$, with $0\le\lambda\le1$ and $\lambda+\mu=1$, and such that, for 
every $\eps > 0$,  there exist arbitrarily large positive integers $Q$ with
$$Q^\lambda \, \Vert Q\alp_1 \Vert \ll1 
\quad \hbox{and} \quad 
Q^\mu\, \Vert Q\alp_2\Vert \ll \eps.
$$

We may also consider linear forms in $1, \alpha_1, \ldots , \alpha_n$ and search for real numbers $\kappa > 0$, $C > 0$, and 
arbitrarily large integers $X$ with 
$$
|x_0 + x_1 \alpha_1 + \ldots + x_n \alpha_n| \le C X^{-n}, \quad |x_1|, \ldots , |x_n| \le CX,
$$
and
$$
|x_n| \le X (\log X)^{-\kappa}.
$$
The case $n=2$ is solved in \cite{BdM12}, where a weaker result is established when $n \ge 3$. 

In analogy with the conjecture of Littlewood, one can ask whether, given a positive integer $n$ and $(\alp_1,\ldots,\alp_n)$ in 
$\Q^n_p$, we have 
$$\inf_{\substack{q, r_1, \ldots , r_n \in \Z \\ qr_1 \cdots r_n\ne0}}|qr_1\cdots r_n||q\alp_1-r_1|_p \cdots |q\alp_n-r_n|_p=0.\eqno\hbox{(EKn)}$$   
When $n=1$, this problem is a particular case of a conjecture formulated by Einsiedler and Kleinbock  \cite{Eis}. 
If $\alp_1,\ldots,\alp_n$ lie in an algebraic number field of degree $n+1$, embedded in $\Q_p$, then (EKn) is true 
under a stronger form (see \cite{Teu} for $n \ge 1$  
and \cite{BdM19} for $n=1$). In the case  $n\ge2$, one can ask whether the results of \cite{Teu} may be refined. 

 
\medskip

\noindent 
{\it Note in press: } 
Alan Haynes informed us that, in a joint paper with Dhanda and Flynn, they proved
independently Theorem 1.1; see \cite[Theorem 5]{DFH26}.

\end{document} 

\fin

\bibitem{Adal} F. Adiceam, E. Nesharim, and F. Lunnon, On the $t$-adic Littlewood conjecture,
Duke Math. J. 170(10), 2371-2419.
\bibitem{Bu} D. Badziahin, Y. Bugeaud, M. Einsiedler and D. Kleinbock, On the complexity
of a putative counterexample to the $p$-adic Littlewood conjecture. Compositio Math. 151 (2015), 1647-1662.
\bibitem{Bk} A. Baker, On an analogue of Littlewood's Diophantine approximation problem, Michigan Math. J. 11 (1964), 247-250.
\bibitem{DL} H. Davenport and D. J. Lewis, An analogue of a problem of Littlewood, Michigan Math. J. 10 (1963), 157-160. 
\bibitem{EKL} M. Einsiedler, A. Katok, and E. Lindenstrauss. Invariant measures and the set of exceptions to the Littlewood conjecture, Ann. of Math. 164 (2006), 513-560. 

\bibitem{Lang} S. Lang, Algebraic numbers, Addison-Wesley (1964).


\begin{Theorem} 
Let $n$ be an integer with $n\ge2$, and let $E$ be a real algebraic number field of degree $n+1$ over $\Q$. Let $\alp_1, \ldots ,\alp_n$ be real numbers in $E$ such that $(1,\alp_1,\ldots,\alp_n)$ is a linear basis of $E$ over $\Q$. 
Let $f_1, \ldots , f_{n-1}$ be continuous, ultimately non-decreasing functions $[2, + \infty) \to \R_{>0}$ satisying 
$$
f_1 (x) \cdots f_{n-1} (x) = \log x, \quad  x \ge 2,   \eqno\hbox{\rm(1)}
$$
and 
$$
\lim_{x \to + \infty} f_1(x) = + \infty, \quad 0 < f_1(x) \le \ldots \le  f_{n-1} (x) \le (f_1(x))^2, \quad x \ge 2.  \eqno\hbox{\rm(1')}
$$
Then, there is an infinite set of integers $Q$ 
with $Q \ge 2$ satisfying
$$Q^{1/n}\Vert Q\alp_ i \Vert\ll \frac{1}{f_i (Q)}, \quad  1\le i \le n-1, \eqno\hbox{\rm(2)}$$
$$Q^{1/n}\Vert Q\alp_n\Vert\ll 1, \eqno\hbox{\rm(2')}$$
where the constants implicit in $\ll$ only depend on $\alp_1,\ldots,\alp_n$. 
\end{Theorem}

For the functions $f_1, \ldots , f_{n-1}$  in Theorem 1.1, we can take real numbers $\nu_1, \ldots ,\nu_{n-1}$ with
$$
0<\nu_1\le\ldots\le\nu_{n-1}\le2\nu_1, \quad 
\nu_1+\ldots+\nu_{n-1}=1,
$$
and set $f_i (x) = (\log x)^{\nu_i}$ for $x \ge 2$ and $i=1, \ldots , n-1$. 

For $n \ge 2$, by taking $f_i (x) = (\log x)^{1 / (n-1)}$ for $i=1, \ldots , n-1$ and $x \ge 2$, 
we recover Peck's result quoted above. 
We do not know whether this result may be improved, however